\newtheorem{theorem}{Theorem}[section]
\newtheorem*{theorem*}{Theorem}
\newtheorem{lemma}[theorem]{Lemma}
\newtheorem{remark}[theorem]{Remark}
\begin{document}
\title{${l}^2$ Decoupling for certain degenerate surfaces in $\mathbb{R}^4$}
\author{Kalachand Shuin }
\address{Kalachand Shuin\\
	Department of Mathematics\\
	Indian Institute of Science\\
	Bengaluru, 560012, India.}
\email{shuin.k.c@gmail.com}
\subjclass[2020]{42B37, 42B10}

\keywords{Decoupling inequality, degenerate hypersurfaces, reduction of dimension arguments}
	\maketitle
 \textbf{Abstract:} In this article, we aim to study decoupling inequality for a specific degenerate hypersurface in $\mathbb{R}^4$. Inspired by the work of Bourgain--Demeter and Li--Zheng,  we consider the hypersurface $\mathcal{S}^3_{4}:=\{(\xi_1,\xi_2,\xi_3,\xi^4_1+\xi^4_2+\xi^4_3):0\leq \xi_j\leq1, \text{for}~j=1,2,3\}$ in $\mathbb{R}^4$ and study decoupling estimates.

\section{Introduction}
Decoupling inequality, a novel concept in Euclidean harmonic analysis, manifests as a variant of almost orthogonality in the non-Hilbert spaces. It's inception traces back to Wolff's exploration in his local smoothing work, as documented in \cite{Wolff}. Subsequently, in 2015, Bourgain and Demeter \cite{BD} proved the sharp decoupling conjecture for any compact $C^2$ hypersurface with positive definite secound fundamental form. Since then, numerous mathematicians have delved into the decoupling problem.
Decoupling inequality has numerous applications in the field of number theory, distance problems and PDEs \cite{DORZ, BDG, Guth}. Very recently, Gan and Wu \cite{GanWu} have  established weighted decoupling inequality and using that idea they have improved  the maximal Bochner--Riesz conjecture in every dimensions. In \cite{BD2016} Bourgain and Demeter proved decoupling theory for general nondegenerate surfaces in $\mathbb{R}^4$. Also see \cite{Oh, GOZK} for more about decoupling inequalities associated with quadratic forms and $3$-dimensional nondegenerate surfaces in $\mathbb{R}^6$. On the other hand, Biswas et. al. \cite{Biswas} have investigated decoupling inequality for finite-type curves in $\mathbb{R}^2$. Recently,  Li and Zheng \cite{LiZheng} have studied decoupling inequality of degenerate hypersurfaces in $ \mathbb{R}^3$. The result of \cite{LiZheng} can be deduced from the work of  Li and Yang \cite{Yang}. In \cite{Yang}, the authors have investigated decoupling for mixed-homogeneous polynomials in $\mathbb{R}^3$.  Therefore, it is natural to investigate decoupling inequality for degenerate hypersurfaces in $\mathbb{R}^4$.
Given a function $f\in L^{1}([0,1]^3)$ and each subset $Q$ of $[0,1]^3$, we consider the following Fourier extension operator
\[\mathcal{E}_{Q}f(x):=\int_{Q}f(\vec{\xi})e(x_1\xi_1+x_2\xi_2+x_3\xi_3+x_4(\xi^4_1+\xi^4_2+\xi^4_3))d\vec{\xi},\]
where $e(t)=e^{2\pi\iota t}$, $\vec{\xi}=(\xi_1,\xi_2,\xi_3)$ and $d\vec\xi=d\xi_1d\xi_2 d\xi_3$.
The hypersurface $\mathcal{S}^{n-1}_{2}:=\{(\xi_1,\cdots,\xi_{n-1},\sum^{n-1}_{i=1}\xi^{2}_i)\}$ denotes the paraboloid in $\mathbb{R}^n$. The Fourier extension operator associated with the paraboloid $\mathcal{S}^{n-1}_{2}$ and defined on the unit cube $[0,1]^{n-1}$ is denoted by $\mathcal{E}^{Par}_{[0,1]^{n-1}}$. Bourgain and Demeter proved the following result
\begin{theorem}[\cite{BD}]\label{BD}
    For $2\leq p\leq \frac{2(n+1)}{n-1}$ and any $\epsilon>0$ the following estimate holds
     \[\Vert \mathcal{E}^{Par}_{[0,1]^{n-1}}f\Vert_{L^{p}(B_R)}\leq C(\epsilon,p)R^{\epsilon}\Big(\sum_{\theta: R^{-1/2}-\text{cubes in }~[0,1]^{n-1}}\Vert \mathcal{E}^{Par}_{\theta}f\Vert^2_{L^{p}(\omega_{B_R})}\Big)^{\frac{1}{2}},\]
\end{theorem}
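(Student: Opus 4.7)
The plan is to follow the Bourgain--Demeter induction-on-scales strategy, bootstrapping the best constant $D_p(R)$ defined as the smallest value for which the claimed inequality holds uniformly in $f$. The goal is to show $D_p(R) \lesssim_\epsilon R^\epsilon$ at the critical exponent $p = \frac{2(n+1)}{n-1}$; the subcritical range $2 \leq p < \frac{2(n+1)}{n-1}$ then follows by interpolation with the trivial $L^2$ decoupling (constant $1$, by Plancherel and the fact that the $\mathcal{E}^{Par}_\theta f$ have essentially disjoint Fourier support) and the $L^\infty$ bound (constant $\lesssim 1$ by Cauchy--Schwarz on the number of caps).

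First I would record two elementary inputs. The first is \emph{parabolic rescaling}: the affine symmetry $(\xi, |\xi|^2) \mapsto (K\xi, K^2|\xi|^2)$ of the paraboloid implies that decoupling a $K^{-1}$-cap into $R^{-1/2}$-subcaps costs only a factor $D_p(RK^{-2})$, with no additional loss. The second is a trivial bound $D_p(R) \lesssim R^{(n-1)(\frac{1}{2} - \frac{1}{p})/2}$ from H\"older applied on each $R^{1/2}$-ball together with orthogonality; this serves as the base case for induction on $R$.

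The analytic heart is \emph{multilinear decoupling}. Fix a large parameter $K$, partition $[0,1]^{n-1}$ into cubes $\tau$ of side $K^{-1}$, and call $n$ such caps $\nu$-transverse if their upward normals span a parallelepiped of volume $\geq \nu$. For transverse caps, I would do a wave packet decomposition of each $\mathcal{E}^{Par}_\tau f$ at scale $R$ into tubes of dimensions $R^{1/2} \times \cdots \times R^{1/2} \times R$, and invoke the Bennett--Carbery--Tao multilinear Kakeya inequality to control the $L^{n/(n-1)}$ norm of the product of tube indicators. A standard pigeonholing on wave packet amplitudes converts this into the sharp multilinear $l^2$ decoupling with constant $\lesssim_\epsilon K^{O(1)} R^\epsilon$ at the critical exponent; this is precisely where the exponent $\frac{2(n+1)}{n-1}$ appears, since it matches the scaling afforded by BCT.

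Finally, the \emph{Bourgain--Guth} iteration passes from multilinear to linear decoupling. On each ball $B_R$, either there are $n$ $\nu$-transverse $K^{-1}$-caps each carrying a comparable share of the mass of $\mathcal{E}^{Par}_{[0,1]^{n-1}}f$, in which case the multilinear estimate applies directly, or the mass concentrates near a lower-dimensional algebraic subvariety, in which case one covers the relevant caps by $O_K(1)$ slabs and reduces to the linear estimate at a smaller scale by parabolic rescaling, picking up a factor $D_p(RK^{-2})$. Balancing these two contributions yields a recursion of the form $D_p(R) \leq C_\epsilon K^{O(1)} R^\epsilon + C K^\alpha D_p(RK^{-2})$; iterating $O(\log R / \log K)$ times and choosing $K = K(\epsilon)$ large enough closes the bootstrap and yields $D_p(R) \lesssim_\epsilon R^\epsilon$. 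The main obstacle is the multilinear step, which depends on the deep Bennett--Carbery--Tao Kakeya inequality; the subtle technical point is organizing the Bourgain--Guth dichotomy so that the $K^{O(1)}$ losses accumulated through the iteration are absorbed into $R^\epsilon$ for the chosen $K$.
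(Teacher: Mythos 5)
The paper states this as a cited result from Bourgain and Demeter \cite{BD} and invokes it purely as a black-box input; there is no proof of Theorem~\ref{BD} in the paper to compare against. Your sketch correctly lists the principal ingredients of the Bourgain--Demeter argument: the induction-on-scales quantity $D_p(R)$, parabolic rescaling, a multilinear estimate ultimately resting on Bennett--Carbery--Tao multilinear Kakeya, and a Bourgain--Guth broad--narrow dichotomy to pass from multilinear to linear decoupling. One caveat worth flagging: the passage from multilinear Kakeya to sharp multilinear $l^2$ decoupling is considerably more delicate than ``wave packet decomposition plus pigeonholing on amplitudes.'' In \cite{BD} the multilinear input is deployed inside a multi-scale iteration (the ``ball inflation'' step, combined with $L^2$-orthogonality, H\"older interpolation, and parabolic rescaling at a whole ladder of intermediate scales between $R^{-1/2}$ and $K^{-1}$), and the critical exponent $\frac{2(n+1)}{n-1}$ emerges from optimizing that iteration rather than from matching BCT scaling in a single step. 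As a high-level roadmap your proposal is faithful to \cite{BD}, but the multilinear-to-linear step as you describe it conceals most of the technical content of the proof.
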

where the weight function $\omega_{B_{R}}$ associated with a ball $B_{R}(x_0)$ of radius $R$ and centered at $x_0$ is given by $\omega_{B_{R}}(x)=(1+\frac{|x-x_0|}{R})^{-100n}$ . 
In \cite{LiZheng}, Li and Zheng have studied decoupling inequality for finite type surfaces in $\mathbb{R}^3$. In fact, they have proved the following result
\begin{theorem}[\cite{LiZheng}]
    For $2\leq p\leq4$ and any $\epsilon>0$, there exists a constant $C(\epsilon,p)$ s.t. 
    \[\Vert \mathcal{E}_{[0,1]^2}f\Vert_{L^{p}(B_R)}\leq C(\epsilon,p)R^{\epsilon}\Big(\sum_{\theta\in \mathcal{F}_{3}(R,4)}\Vert \mathcal{E}_{\theta}f\Vert^2_{L^{p}(\omega_{B_R})}\Big)^{\frac{1}{2}},\]
\end{theorem}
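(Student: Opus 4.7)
The strategy is a dyadic decomposition of $[0,1]^2$ adapted to the directions in which the Hessian $\mathrm{Hess}\,\phi=\mathrm{diag}(12\xi_1^2,12\xi_2^2)$ of $\phi(\xi_1,\xi_2)=\xi_1^4+\xi_2^4$ degenerates. Write $\tau_{\alpha_1,\alpha_2}=[\alpha_1,2\alpha_1]\times[\alpha_2,2\alpha_2]$ for dyadic $\alpha_j\in\{2^{-k_j}:k_j\ge 0\}$, and call the piece \emph{nondegenerate} when both $\alpha_j\ge R^{-1/4}$ and \emph{degenerate} otherwise. I would decouple each nondegenerate piece by parabolic rescaling and Theorem~\ref{BD}, decouple each degenerate piece by a reduction-of-dimension argument to a cylinder over a planar curve, and then combine the dyadic pieces via a two-parameter Littlewood--Paley estimate.

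\textbf{Nondegenerate pieces.} On such a $\tau$ the principal curvatures are $\sim\alpha_j^2$, and the natural decoupling caps have sides $R^{-1/2}\alpha_1^{-1}\times R^{-1/2}\alpha_2^{-1}$ (so that curvature times (side)$^2\sim R^{-1}$). Apply the affine substitution $\xi_j=\alpha_j(1+\eta_j)$ with $\eta_j\in[0,1]$ together with a compatible anisotropic spatial dilation; after absorbing affine-in-$\eta$ terms as translations of the spatial variables, the phase is a $C^3$-small perturbation of an elliptic paraboloid over $[0,1]^2$, and $x\in B_R$ pulls back to a region containing a ball of radius $R^*\sim\alpha_{\max}^4 R\ge 1$. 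Theorem~\ref{BD} in $\R^3$ (which requires $p\le 2(3+1)/(3-1)=4$, exactly our range) then decouples at scale $R^*$ into unit caps in the rescaled variables, and unwinding the rescaling gives the $R^{-1/2}\alpha_j^{-1}$-rectangles of $\mathcal{F}_3(R,4)$. The highly anisotropic subcase $\alpha_1\ll\alpha_2$ (or vice versa), where the paraboloid is very eccentric, is handled by further dyadic subdivision and the cylindrical argument below.

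\textbf{Degenerate pieces.} When $\alpha_1\le R^{-1/4}$, for $x\in B_R$ one has $|x_3\xi_1^4|\lesssim R\cdot R^{-1}=1$, so $e(x_3\xi_1^4)$ is an $O(1)$ modulation that can be absorbed, reducing $\mathcal{E}_\tau f$ to (essentially) the extension operator for the cylinder $\{(\xi_1,\xi_2,\xi_2^4):\xi_1\in[0,R^{-1/4}],\,\xi_2\in[\alpha_2,2\alpha_2]\}$. Cylindrical decoupling (Fubini in the flat $\xi_1$-direction) combined with $\ell^2$-decoupling for the planar curve $\xi_2\mapsto(\xi_2,\xi_2^4)$ on $[\alpha_2,2\alpha_2]$---itself obtained from the standard parabola decoupling by parabolic rescaling, since the curvature there is $\sim\alpha_2^2\ge R^{-1/2}$---then produces caps of size $R^{-1/4}\times R^{-1/2}\alpha_2^{-1}$. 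If both $\alpha_j\le R^{-1/4}$, the piece $[0,R^{-1/4}]^2$ is already in $\mathcal{F}_3(R,4)$ and nothing remains to do.

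\textbf{Assembly and main obstacle.} Since the $\tau_{\alpha_1,\alpha_2}$ are dyadic product rectangles with pairwise disjoint Fourier supports in the first two coordinates, the two-parameter Chang--Fefferman Littlewood--Paley inequality combined with Minkowski's inequality in $L^{p/2}$ (valid since $p\ge 2$) yields
\[
\Vert\mathcal{E}_{[0,1]^2}f\Vert_{L^p(B_R)}\lesssim(\log R)^{O(1)}\Big(\sum_{\alpha_1,\alpha_2}\Vert\mathcal{E}_{\tau_{\alpha_1,\alpha_2}}f\Vert_{L^p(\omega_{B_R})}^2\Big)^{1/2},
\]
with the $(\log R)^{O(1)}$ loss absorbed into $R^\epsilon$; combined with the per-piece decouplings this gives the claimed inequality. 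The principal technical obstacle is the rescaling step on nondegenerate pieces: one must verify uniformly in $(\alpha_1,\alpha_2)$ that the rescaled surface is a genuine $C^3$-small perturbation of a nondegenerate paraboloid so that Theorem~\ref{BD} applies with constants independent of the dyadic scale parameters, and separately handle the anisotropic subcase $\alpha_1/\alpha_2\to 0$ where direct isotropic Bourgain--Demeter is not available. This, together with the curve decoupling at the root of the dimensional reduction, accounts for most of the technical content of the Li--Zheng argument.
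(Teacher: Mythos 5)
Your strategy is structurally different from the one in the paper (and in Li--Zheng): you perform a full two--parameter dyadic decomposition of $[0,1]^2$ down to scale $R^{-1/4}$ and then try to decouple each dyadic block in one stroke, assembling at the end with a two--parameter Littlewood--Paley estimate. The paper instead decomposes only to an intermediate scale $K\ll R^{\epsilon}$, treats the region where both coordinates exceed $K^{-1/4}$ by rescaling to a perturbed paraboloid and applying Bourgain--Demeter at scale $R/K$, and closes the remaining (degenerate) regions by an \emph{induction on scales}: the quantity $D_p(R)$ is shown to satisfy $D_p(R)\le C\,K^{O(1)}R^{\epsilon}+C\,K^{\epsilon}D_p(R/K)$, which is then iterated $\log_K R$ times. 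Your proposal contains no such recursion, and this is precisely where it has a genuine gap.

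The gap sits in your treatment of the nondegenerate blocks. The affine map $\xi_j=\alpha_j(1+\eta_j)$ takes $[\alpha_j,2\alpha_j]$ to a unit interval, but it does \emph{not} normalize the curvature: the coefficient of $\eta_j^{2}$ in the rescaled phase is $\sim\alpha_j^{4}$, which is direction--dependent. When $\alpha_1\ll\alpha_2$ no single anisotropic spatial dilation makes this a $C^2$--small perturbation of a nondegenerate paraboloid in the sense required by Section~7 of \cite{BD}; one direction remains much flatter than the other. You explicitly acknowledge this (``handled by further dyadic subdivision and the cylindrical argument below'') but never supply the argument, and the cylindrical/Fubini trick is not available here: with $\xi_1\in[\alpha_1,2\alpha_1]$ and $\alpha_1\ge R^{-1/4}$, freezing $x_1$ produces a frequency projection that is a parabola thickened by $\sim\alpha_1^{4}\gtrsim R^{-1}$, which is \emph{not} negligible at the target cap scale (the vertical thickness of a cap is $\sim R^{-1}$). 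This is exactly why the cylinder reduction is legitimate only on your degenerate blocks $\alpha_1\lesssim R^{-1/4}$, where $\alpha_1^{4}\lesssim R^{-1}$.

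The rescaling that does normalize curvature uniformly is $\xi_j=\lambda_j+\lambda_j^{-1}\delta\,\eta_j$, i.e.\ one first subdivides $[\lambda_j,2\lambda_j]$ into $\sim\lambda_j^{2}/\delta$ intervals of length $\lambda_j^{-1}\delta$; the rescaled quadratic coefficient is then $6\delta^{2}$ in every direction. But the trivial ($\ell^2$--Cauchy--Schwarz) cost of this subdivision is $\big(\prod_j \lambda_j^2/\delta\big)^{1/2}$, so one must take $\delta\gtrsim K^{-1/2}$ for some $K\ll R^{\epsilon}$; this only makes sense for $\lambda_j\ge K^{-1/4}$ (otherwise the sub--intervals are larger than the block), and after applying \cite{BD} at scale $R/K$ one lands on the caps in $\mathcal{F}_3(R,4)$ with both components $\geq K^{-1/4}$ --- not on those with a component in $[R^{-1/4},K^{-1/4})$. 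Producing the latter forces one to recurse on the regions where a coordinate is $\le K^{-1/4}$, which is the induction $D_p(R)\lesssim K^{O(1)}R^{\epsilon}+K^{\epsilon}D_p(R/K)$. Your degenerate--block and assembly steps are reasonable modulo routine technicalities (e.g.\ replacing the informal ``absorb the $O(1)$ modulation'' by the standard graph--over--cylinder comparison), but without the intermediate scale $K$ and the induction on scales the nondegenerate anisotropic blocks are not decoupled, so the proof as written does not close.
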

where the Fourier extension operator $\mathcal{E}_{[0,1]^2}$ is associated with the phase function $\xi^4_1+\xi^4_2$ on the unit square $[0,1]^2$, and $\mathcal{F}_{3}(R,4)$ is defined in \cite{LiZheng}. Also see \cite{LiZheng2}, where the authors have studied restriction estimates for the hypersurface $\{(\xi_1,\xi_2,\xi^4_1+\xi^4_2):0\leq \xi_1,\xi_2\leq1\}$ in $\mathbb{R}^3$. 

Motivated by the work of Li and Zheng \cite{LiZheng}, we consider the following degenerate hypersurface in $\mathbb{R}^4$
 \[\mathcal{S}^3_{4}:=\{(\xi_1,\xi_2,\xi_3,\xi^4_1+\xi^4_2+\xi^4_3):0\leq \xi_j\leq1, \text{for}~j=1,2,3\}.\]
Our goal is to prove decoupling inequality for the extension operator $\mathcal{E}_{[0,1]^3}$ associated with the phase function $\xi^4_1+\xi^4_2+\xi^4_3$. 

\subsection{Notations and main results}
Given $1\ll R$, we partition the unit interval $[0,1]$ as \[[0,1]=\cup_{k}\theta_k, \]
where $\theta_0=[0,R^{-1/4}]$, and $\theta_k=[2^{k-1}R^{-1/4},2^{k}R^{-1/4}]$ for $1\leq k\leq \frac{1}{4}\log_{2}R$. Further, we decompose each $\theta_k$ as follows
\begin{align}\label{eq0}
\theta_k=\cup^{2^{2(k-1)}}_{\mu=1}\theta_{k,\mu},
\end{align}
where $\theta_{k,\mu}=[2^{k-1}R^{-1/4}+(\mu-1)2^{-(k-1)}R^{-1/4},2^{k-1}R^{-1/4}+\mu2^{-(k-1)}R^{-1/4}]$. Then we define 
\begin{eqnarray*}
\mathcal{F}_{4}(R,4)=\Big\{\theta_{k_1,\mu_1}\times \theta_{k_2,\mu_2}\times \theta_{k_3,\mu_3},\theta_{0}\times \theta_{k_2,\mu_2}\times \theta_{k_3,\mu_3},\theta_{k_1,\mu_1}\times \theta_{0}\times \theta_{k_3,\mu_3},\\
\theta_{k_1,\mu_1}\times \theta_{k_2,\mu_2}\times \theta_{0}, 
\theta_{k_1,\mu_1}\times \theta_{0}\times \theta_{0},\theta_{0}\times \theta_{k_2,\mu_2}\times \theta_{0},\theta_{0}\times \theta_{0}\times \theta_{k_3,\mu_3},\theta_0\times \theta_0\times \theta_0\Big\},
\end{eqnarray*}
where $1\leq k_i\leq \frac{1}{4}\log_2 R$ and $1\leq \mu_i\leq 2^{2(k_i-1)}$ for $i=1,2,3$.
The similar types of  partition can be found in \cite{LiZheng2}.
Now we state our main result
\begin{theorem}\label{Maintheorem}
    For $2\leq p\leq \frac{10}{3}$ and any $\epsilon>0$, there exists a constant $C(\epsilon,p)$ such that 
    \begin{align}\label{mainineq}
      \Vert \mathcal{E}_{[0,1]^3}f\Vert_{L^{p}(B_R)}\leq C(\epsilon,p)R^{\epsilon}\Big(\sum_{\theta\in \mathcal{F}_{4}(R,4)}\Vert \mathcal{E}_{\theta}f\Vert^2_{L^{p}(\omega_{B_R})}\Big)^{\frac{1}{2}}.  
    \end{align}
\end{theorem}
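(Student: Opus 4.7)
My plan is to reduce the proof to the Bourgain--Demeter paraboloid decoupling (Theorem \ref{BD}) and Li--Zheng's decoupling (Theorem 1.2) via a combination of dyadic decomposition, affine rescaling, and cylindrical reduction-of-dimension arguments. First I decompose
\[
[0,1]^3 \;=\; \bigcup_{0\le k_1,k_2,k_3\le \tfrac14\log_2 R} \theta_{k_1}\times\theta_{k_2}\times\theta_{k_3},
\]
separating the degenerate neighborhoods of the coordinate planes from the non-degenerate bulk. Since there are only $O((\log R)^3)$ dyadic products, Cauchy--Schwarz applied to the decomposition $\mathcal{E}_{[0,1]^3}f=\sum \mathcal{E}_{\theta_{k_1}\times\theta_{k_2}\times\theta_{k_3}}f$ costs at most a factor of $(\log R)^{3/2}\lesssim R^\epsilon$, and it suffices to establish decoupling on each dyadic product into its corresponding members of $\mathcal{F}_4(R,4)$.

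For a non-degenerate product $\theta_{k_1}\times\theta_{k_2}\times\theta_{k_3}$ with every $k_i\ge 1$, I perform the affine rescaling $\xi_i = 2^{k_i-1}R^{-1/4}(1+u_i)$ with $u_i\in[0,1]$. Taylor-expanding $\xi_i^4$ and absorbing the constant and $u$-linear pieces into translations of the dual variables $y_1,\dots,y_4$, the rescaled phase becomes $\sum y_i u_i + y_4\Phi(u)$, where $\Phi$ is a polynomial whose Hessian has eigenvalues $\sim 2^{4(k_i-1)}(1+u_i)^2$, uniformly positive definite on $[0,1]^3$. Thus $(u,\Phi(u))$ is a non-degenerate $C^\infty$-hypersurface in $\mathbb{R}^4$ and the general form of Bourgain--Demeter applies, yielding an $l^2$ decoupling into (anisotropic) $u$-caps of sides $\sim 2^{-2(k_i-1)}$; unscaling these sides recovers precisely the caps $\theta_{k_1,\mu_1}\times\theta_{k_2,\mu_2}\times\theta_{k_3,\mu_3}\in\mathcal{F}_4(R,4)$. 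The cubic and quartic Taylor remainders are uniformly bounded on $[0,1]^3$ and absorbed as smooth perturbations of the leading quadratic.

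The degenerate products are handled by cylindrical reduction. When $k_i=0$, $\xi_i\in[0,R^{-1/4}]$ forces $x_4\xi_i^4=O(1)$ on $B_R$, so the $\xi_i$-direction acts as a passive flat coordinate requiring no further partition of $\theta_0$. For the singly-degenerate case $\theta_0\times\theta_{k_2}\times\theta_{k_3}$ I approximate the surface by the cylinder $\{(\xi_1,\xi_2,\xi_3,\xi_2^4+\xi_3^4)\}$, apply Li--Zheng (Theorem 1.2) fiberwise in $(\xi_2,\xi_3)$, and reassemble via Minkowski together with Hausdorff--Young in the $(\xi_1,x_1)$ Fourier pair, the $e(x_4\xi_1^4)$ factor serving as a uniformly smooth bounded multiplier. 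The doubly- and triply-degenerate products reduce analogously to one-dimensional decoupling for the finite-type curve $\xi^4$ on $\theta_k$ and to a trivial Minkowski estimate respectively, both well within the range $p\le 10/3$.

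The main obstacle is in Step 2: under the rescaling, the image of $B_R$ in the $y$-coordinates is an anisotropic ``plate'' of dimensions $(2^{k-1}R^{3/4})^3\times 2^{4(k-1)}$ rather than the round ball on which Bourgain--Demeter is standardly stated. Justifying the decoupling on this super-plate requires either a plate version of the theorem (deduced by tiling in the base directions together with rapid-decay estimates outside the stationary-phase locus $|y_i|\lesssim |y_4|$), or an equivalent pull-back of the weight $\omega_{B_R}$ to the $y$-coordinates compatible with Bourgain--Demeter, without any polynomial loss in $R$. A secondary technicality is the multi-index bookkeeping when the $k_i$ differ; this is a straightforward tensorial extension of the equal-$k$ computation but must be executed carefully so that the anisotropic cap sides in $u$ unscale exactly to $2^{-(k_i-1)}R^{-1/4}$ in $\xi_i$.
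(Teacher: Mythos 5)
Your high-level strategy (dyadic decomposition separating degenerate coordinate-plane neighborhoods, affine rescaling to a perturbed paraboloid in the non-degenerate bulk, cylindrical reduction-of-dimension for the degenerate pieces) is indeed the skeleton of the paper's argument. But you have misdiagnosed where the real difficulty lies, and the step you dismiss as a ``secondary technicality'' is in fact a genuine gap that your plan does not close.

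The core problem is that you decompose directly at the \emph{final} scale, $[0,1]^3 = \bigcup \theta_{k_1}\times\theta_{k_2}\times\theta_{k_3}$ with $\theta_k$ of width $\sim 2^{k}R^{-1/4}$, and then try to pass from each dyadic product to its members of $\mathcal{F}_4(R,4)$ by one application of Bourgain--Demeter. After your rescaling $\xi_i=2^{k_i-1}R^{-1/4}(1+u_i)$, the rescaled phase has Hessian $\sim\mathrm{diag}\bigl(2^{4(k_1-1)},2^{4(k_2-1)},2^{4(k_3-1)}\bigr)$ in $u$ (up to the overall factor absorbed into $x_4$). For a piece of side length $a_i=2^{k_i-1}R^{-1/4}$ to be decoupled into caps of side $2^{-(k_i-1)}R^{-1/4}$, the scale reduction in direction $i$ is $a_i \big/ \bigl(2^{-(k_i-1)}R^{-1/4}\bigr) = 2^{2(k_i-1)}$, so the effective Bourgain--Demeter radius in direction $i$ would need to be $S_i=2^{4(k_i-1)}$. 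When the $k_i$ differ, there is no single $S$ for which Bourgain--Demeter produces the required caps simultaneously in all three directions. You cannot normalize the Hessian to $\sim I$ by an affine map without turning the domain $[0,1]^3$ into a badly anisotropic box and simultaneously skewing the caps back out of the desired family; and you cannot first apply Bourgain--Demeter at the finest of the three scales and then coarsen, because coarsening an $l^2$ decoupling is the wrong direction of the inequality for $p>2$. By contrast, the ``plate versus ball'' issue you flag as the main obstacle is routine: Bourgain--Demeter localizes to any ball, and a plate is tiled by balls, so this is handled by a straightforward sum over balls (as the paper does with $\mathcal{T}(B_R)$ covered by balls $B_{R/K}$). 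You have the relative difficulty of the two obstacles inverted.

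The paper resolves the anisotropy with two devices you do not use. First, it introduces an \emph{intermediate} scale $K$ with $1\ll K\ll R^\epsilon$ and decomposes $[0,1]^3$ into $\Omega_0,\dots,\Omega_7$ at scale $K^{-1/4}$, so that in the fully non-degenerate region $\Omega_7$ one has $\lambda_j\ge K^{-1/4}$ for all $j$: after a \emph{$j$-dependent} change of variables $\xi_j=\lambda_j+\lambda_j^{-1}K^{-1/2}\eta_j$ (applied to a $\tau$-piece of sides $\lambda_j^{-1}K^{-1/2}$, obtained from $\Omega_7$ by a Cauchy--Schwarz step costing only $K^{3/4}$), the rescaled one-variable phases $\phi_j$ have $\phi_j''\sim 1$ uniformly \emph{because} $\lambda_j\ge K^{-1/4}$ bounds the remainders $\lambda_j^{-2}K^{-1/2}$ and $\lambda_j^{-4}K^{-1}$ by $1$. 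Crucially the ratio of $\tau$-side to cap-side is $(R/K)^{1/2}$ in every coordinate, so a single application of Bourgain--Demeter on $B_{R/K}$ produces the right caps, and the $K^{O(1)}$ loss is absorbed against $(R/K)^\epsilon$. Second, the paper closes the remaining regions by \emph{induction on scales}: $\Omega_0$ rescales to the full problem at radius $R/K$, and each of $\Omega_1,\dots,\Omega_6$ is reduced, after a cylindrical (freeze-variable) decoupling, to a mixed-phase hypersurface $\phi_1(\xi_1)+\xi_2^4+\xi_3^4$ (or $\varphi_1+\varphi_2+\xi_3^4$), for which a separate quantity $\tilde D_p(R)$ is shown to satisfy a self-improving recursion $\tilde D_p(R)\le CK^{O(1)}R^\epsilon + CK^\epsilon\tilde D_p(R/K)$ (Lemmas~\ref{Lemma3.5} and \ref{Lemma05.2}). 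Your plan has no analogue of this recursion; without it, neither the $\Omega_0$-type pieces nor the mixed-degeneracy pieces can be closed.

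Finally, your treatment of the degenerate products is too coarse. Treating $e(x_4\xi_1^4)$ on $\xi_1\in[0,R^{-1/4}]$, $|x_4|\lesssim R$ as a ``uniformly smooth bounded multiplier'' is not justified: its $\xi_1$-derivative is of size $|x_4|\xi_1^3\lesssim R^{1/4}$, so it is not smooth at unit scale, and passing to the exact cylinder $\{(\xi,\xi_2^4+\xi_3^4)\}$ is not a harmless approximation. The paper instead keeps the exact phase and isolates the almost-flat direction by the projection argument (Lemma~4.1 of Guth's notes, then Li--Zheng's one-dimensional lemma), followed by the recursive Lemma~\ref{Lemma3.5}. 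That machinery is what actually makes the cylindrical reduction rigorous.
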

\begin{remark}
    It is important to note that the range of $p$ in Theorem  \ref{Maintheorem} is same as the range of $p$ in Theorem \ref{BD} for dimension $n=4$.
\end{remark}

Let $m\geq2$ be an integer. Then we consider the operator  \[\mathcal{E}^{m}_{[0,1]^3}f(x):=\int_{[0,1]^3}f(\vec{\xi})e(x_1\xi_1+x_2\xi_2+x_3\xi_3+x_4(\xi^{2m}_1+\xi^{2m}_2+\xi^{2m}_3))d\vec{\xi}.\]
\begin{remark}\label{powerm}
     For $2\leq p\leq \frac{10}{3}$ and any $\epsilon>0$, there exists a constant $C(\epsilon,p)$ such that 
    \begin{align*}
      \Vert \mathcal{E}^{m}_{[0,1]^3}f\Vert_{L^{p}(B_R)}\leq C(\epsilon,p)R^{\epsilon}\Big(\sum_{\theta\in \mathcal{F}_{4}(R,2m)}\Vert \mathcal{E}^{m}_{\theta}f\Vert^2_{L^{p}(\omega_{B_R})}\Big)^{\frac{1}{2}}.  
    \end{align*}
\end{remark}
The proof of the remark \ref{powerm} can be completed using the similar proof of Theorem \ref{Maintheorem}.
Observe that the Gaussian curvature of the hypersurface is \[\kappa=12^3 (\xi_1\xi_2\xi_3)^2.\]
Therefore, the curvature vanishes on the planes $\xi_1=0,\xi_2=0$ or $\xi_3=0$. The main concepts we aim to use to prove the theorem are the decoupling inequality of perturbed paraboloid and induction on scales. Therefore, 
we decompose the unit cube $[0,1]^3$ into following eight pieces. Let $1\ll K\ll R^{\epsilon}$ for any fixed $\epsilon>0$, such that $K^{-1/4}, R^{-1/4}$  and $(\frac{R}{K})^{-1/4}$ are dyadic numbers. Then 
\[[0,1]^3=\cup^7_{i=0}\Omega_i,\] where 
\begin{itemize}
\item $\Omega_0:=[0,K^{-1/4}]^3$,
\item $\Omega_1=[K^{-1/4},1]\times [0,K^{-1/4}]^2$, $\Omega_2=[0,K^{-1/4}]\times [K^{-1/4},1]\times [0,K^{-1/4}]$, $\Omega_3=[0,K^{-1/4}]^2\times [K^{-1/4},1]$,
\item $\Omega_4=[0,K^{-1/4}]\times [K^{-1/4},1]^2$, $\Omega_5=[K^{-1/4},1]\times [0,K^{-1/4}]\times [K^{-1/4},1]$, $\Omega_6=[K^{-1/4},1]^2\times [0,K^{-1/4}]$,
\item  $\Omega_7=[K^{-1/4},1]^3$.
\end{itemize}

Using Minkowski's inequality and the Cauchy-Schwarz inequality we get 
\[\Vert \mathcal{E}_{[0,1]^3}f\Vert_{L^{p}(B_R)}\leq 2^{3/2}\Big(\sum^7_{j=0}\Vert \mathcal{E}_{\Omega_j}f\Vert^2_{L^{p}(B_R)}\Big)^{1/2}.\]
Note that unlike \cite{LiZheng}, we have more Fourier extension operators with degenerate hypersurfaces.
Therefore, we need to deal with each operator separately. 
We shall prove Theorem \ref{Maintheorem} in the following three sections.

\section{Estimate of $\mathcal{E}_{\Omega_7}f$}
     We exploit the ideas of \cite{BD} and \cite{LiZheng}. We employ Bourgain and Demeter's decoupling for perturbed paraboloid. In order to do that we divide the region $\Omega_7$  into small pieces $\Omega_{7}=\cup \Omega_{\lambda_1,\lambda_2,\lambda_3}$, where $\Omega_{\lambda_1,\lambda_2,\lambda_3}=\{(\xi_1,\xi_2,\xi_3):\lambda_j\leq \xi_j\leq 2\lambda_j\}$ for $\lambda_j\in [K^{-1/4},1/2]$ and $j=1,2,3$. Note that $\Omega_7:=[K^{-1/4},1]\times [K^{-1/4},1]\times [K^{-1/4},1]$. Therefore, the Gaussian curvature of $\mathcal{S}^{3}_{4}$ is almost a constant and non-zero in the region $\Omega_{\lambda_1,\lambda_2,\lambda_3}$. Further, we divide each $\Omega_{\lambda_1,\lambda_2,\lambda_3}$  into small pieces
     
     \begin{eqnarray*}
     \Omega_{\lambda_1,\lambda_2,\lambda_3}
     =\bigcup_{j,l,m} I^{j}_{\lambda_1}\times I^{l}_{\lambda_2}\times I^{m}_{\lambda_3}
     :=\bigcup \tau^{j,l,m}_{\lambda_1,\lambda_2,\lambda_3},
     \end{eqnarray*}
     where 
     \begin{eqnarray*}
     &&I^{j}_{\lambda_1}=[\lambda_1+\frac{j-1}{\lambda_1 K^{1/2}},\lambda_1+\frac{j}{\lambda_1 K^{1/2}}],~1\leq j\leq \lambda_1K^{1/2},\\
     &&I^{l}_{\lambda_2}= [\lambda_2+\frac{l-1}{\lambda_2 K^{1/2}},\lambda_2+\frac{l}{\lambda_2 K^{1/2}}],~ 1\leq l\leq \lambda_2K^{1/2},\\
     &&I^{m}_{\lambda_3}= [\lambda_3+\frac{m-1}{\lambda_3 K^{1/2}},\lambda_3+\frac{m}{\lambda_3 K^{1/2}}],~1\leq m\leq \lambda_3K^{1/2}.
     \end{eqnarray*}
    
     We abbreviate each $\tau^{j,l,m}_{\lambda_1,\lambda_2,\lambda_3}$ by $\tau$ and w.l.o.g. we consider 
     \begin{align}\label{tauwithK}
         \tau=[\lambda_1,\lambda_1+\lambda^{-1}_1K^{-1/2}]\times [\lambda_2,\lambda_2+\lambda^{-1}_2K^{-1/2}]\times [\lambda_3,\lambda_3+\lambda^{-1}_3K^{-1/2}].
     \end{align} 
     The proof for other $\tau$ can be followed by similar methods.
     Now applying the following change of variables 
     \begin{eqnarray}\label{change7}
         \xi_j=\lambda_j+\lambda^{-1}_jK^{-1/2}\eta_j,~~~\text{for}~~j=1,2,3
     \end{eqnarray}
     we get 
     \begin{eqnarray*}
|\mathcal{E}_{\tau}f(x)|&=&\Big|\int_{[0,1]^3}\tilde{f}(\vec{\eta})e(\tilde{x_1}\eta_1+\tilde{x_2}\eta_2+\tilde{x_3}\eta_3+\tilde{x_4}\psi(\vec{\eta}))d\vec{\eta}\Big|\\
&=:&|\mathcal{E}^{pertp}_{[0,1]^3}\tilde{f}(\tilde{x})|,
     \end{eqnarray*}
     where
     \begin{eqnarray*}
\tilde{x}&=&\mathcal{T}_{7}(x),~~\tilde{x}_j=\lambda_jK^{-1/2}x_j+4\lambda^{2}_jK^{-1/2}x_4,~j=1,2,3,~\text{and}~ \tilde{x}_4=K^{-1}x_4,\\
\tilde{f}(\vec{\eta})&=&(\lambda_{1}\lambda_{2}\lambda_{3})^{-1}K^{-3/2}f(\lambda_1+\lambda^{-1}_1K^{-1/2}\eta_1,\lambda_2+\lambda^{-1}_2K^{-1/2}\eta_2,\lambda_3+\lambda^{-1}_3K^{-1/2}\eta_3),\\
 \psi(\vec{\eta})&=&\sum^{3}_{j=1}(6\eta^2_j+4\lambda^{-2}_jK^{-1/2}\eta^3_j+\lambda^{-4}_jK^{-1}\eta^{4}_j)
     \end{eqnarray*}
     and $\mathcal{E}^{pertp}_{[0,1]^3}$ denotes the Fourier extension operator associated with the phase function $\psi$. 
     We claim that for $2\leq p\leq \frac{10}{3}$ and any $\epsilon>0$, 
     \begin{align}\label{Omega7estimate}
         \Vert \mathcal{E}_{\Omega_7}f\Vert_{L^{p}(B_R)}\leq C(\epsilon,p)R^{\epsilon}\Big(\sum_{\theta\in \mathcal{F}_{4}(R,4)}\Vert \mathcal{E}_{\theta}f\Vert^2_{L^{p}(\omega_{B_R})}\Big)^{\frac{1}{2}}.
     \end{align}
     Since \[\Omega_7=\bigcup_{\lambda_1,\lambda_2,\lambda_3}\bigcup_{j,l,m}\tau^{j,l,m}_{\lambda_1,\lambda_2,\lambda_3},\]
     therefore using Minkowski's integral inequality and Cauchy--Schwarz inequality we get 
     \begin{align*}
         \Vert \mathcal{E}_{\Omega_7}f\Vert_{L^{p}(B_K)}\lesssim K^{3/4}\Big(\sum_{\tau\subset\Omega_7}\Vert \mathcal{E}_{\tau}f\Vert^2_{L^{p}(B_{K})}\Big)^{\frac{1}{2}}.
     \end{align*}
     Summing over all balls $B_{K}\subset B_{R}$ we get, 
     \begin{align*}
         \Vert \mathcal{E}_{\Omega_7}f\Vert_{L^{p}(B_R)}\lesssim K^{3/4}\Big(\sum_{\tau\subset\Omega_7}\Vert \mathcal{E}_{\tau}f\Vert^2_{L^{p}(B_{R})}\Big)^{\frac{1}{2}}.
     \end{align*}
     Now, applying the above change of variables \eqref{change7} we get 
     \[\Vert \mathcal{E}_{\tau}f\Vert^p_{L^{p}(B_{R})}=\lambda_1\lambda_2\lambda_3 K^{5/2}\Vert\mathcal{E}^{pertp}_{[0,1]^{3}}\tilde{f}\Vert^{p}_{L^{p}(\mathcal{T}_{7}(B_{R}))},\]
where $\tilde{f}$ is defined above. 
Observe that the phase function $\psi$ corresponding to the operator $\mathcal{E}^{pertp}_{[0,1]^{3}}$ can be written as $\psi(\vec{\eta})=\sum^{3}_{j=1}\phi_j(\eta_j)$. Since $\lambda_j\geq K^{-1/4}$ and $0\leq \xi_j\leq1$, therefore  each $\phi_j$ satisfies the following non-degenerate conditions 
\begin{align}\label{positivedefinite}
   \phi_j''\sim 1,~ |\phi_j^{(3)}|\lesssim 1,~ |\phi_j^{(4)}|\lesssim 1~\text{and} ~|\phi_j^{(l)}|=0,~~~\text{for} ~~l\geq5.
    \end{align}
 Hence, applying Bourgain--Demeter's decoupling from Section $7$ of \cite{BD} we get for $2\leq p\leq\frac{10}{3}$,
\begin{align*}
\Vert\mathcal{E}^{pertp}_{[0,1]^{3}}\tilde{f}\Vert_{L^{p}(B_{\frac{R}{K}})}\leq C({\epsilon},p)(\frac{R}{K})^{\epsilon} \Big(\sum_{\tilde{\theta}:K^{1/2}R^{-1/2}~\text{cubes}}\Vert \mathcal{E}^{pertp}_{\tilde{\theta}}\tilde{f}\Vert_{L^{p}(\omega_{B_{\frac{R}{K}}})}\Big)
\end{align*}
Note that $\mathcal{T}_{7}(B_{R})$ can be covered by $B_{\frac{R}{K}}$ balls.
     Now summing over all $B_{\frac{R}{K}}\subset \mathcal{T}_{7}(B_R)$ and applying the inverse change of variables we deduce the claim \eqref{Omega7estimate}. More details can be found in Lemma $3.1$ of \cite{LiZheng}.
     
    \section{Estimate of $\mathcal{E}_{\Omega_0}f$}  This can be proved by similar method as \cite{LiZheng}.
For the sake of self containment we give a sketch of the proof. We employ re-scaling in order to prove the theorem. Using the following change of variables
$\xi_j=K^{-1/4}\eta_j$ for $j=1,2,3$, we get 
\begin{eqnarray*}
    |\mathcal{E}_{\Omega_0}f(x)|&=&\Big|\int_{[0,1]^3}\tilde{f}(\vec{\eta})e(\tilde{x}_1\eta_1+\tilde{x}_2\eta_2+\tilde{x}_3\eta_3+\tilde{x}_4(\eta^4_1+\eta^4_2+\eta^4_3))d\vec{\eta}\Big|\\
    &=:&|\mathcal{E}_{[0,1]^{3}}\tilde{f}(\tilde{x})|,
\end{eqnarray*}
where 
\begin{eqnarray*}
    &&\tilde{x}=(K^{-1/4}x_1,K^{-1/4}x_2,K^{-1/4}x_3,K^{-1}x_4),\\
    &&\tilde{f}(\vec{\eta})=K^{-3/4}f(K^{-1/4}\eta_1,K^{-1/4}\eta_2,K^{-1/4}\eta_3).
\end{eqnarray*}
    Using re-scaling and the idea of Lemma $3.2$ of \cite{LiZheng} we get
\[\Vert \mathcal{E}_{\Omega_0}f\Vert_{L^{p}(B_R)}\leq D_{p}(\frac{R}{K})\Big(\sum_{\theta\subset\Omega_0}\Vert \mathcal{E}_{\theta}f\Vert^2_{L^{p}(\omega_{B_R})}\Big)^{\frac{1}{2}},\]
where ${D}_p(R)$ denote the smallest constant s.t. inequality \eqref{mainineq} holds and $\theta\in \mathcal{F}_{4}(R,4)$.
    
    \section{Estimates of $\mathcal{E}_{\Omega_1}f$, $\mathcal{E}_{\Omega_2}f$ and $\mathcal{E}_{\Omega_3}f$} 
    We need to deal with $\mathcal{E}_{\Omega_1}f$ only. The other two operators $\mathcal{E}_{\Omega_2}$ and $\mathcal{E}_{\Omega_3}$ can be handled with similar arguments due to symmetry of the regions $\Omega_{j}$, for $j=1,2,3$.
    We decompose the region $\Omega_1$ into small dyadic pieces \[\Omega_1:=\bigcup_{\lambda} \Omega_{1,\lambda},~~ \Omega_{1,\lambda} =[\lambda,2\lambda]\times [0,K^{-1/4}]\times [0,K^{-1/4}] .\]
    Further, we decompose $\Omega_{1,\lambda}$ as \[\Omega_{1,\lambda}=\bigcup_{1\leq j\leq \lambda^2K^{1/2}} [\lambda+\frac{j-1}{\lambda K^{1/2}}, \lambda+\frac{j}{\lambda K^{1/2}}]\times [0,K^{-1/4}]\times [0,K^{-1/4}]:=\bigcup_{1\leq j\leq \lambda^2K^{1/2}}  \tau^j_{\lambda}.\]
Now we claim the following lemma.
\begin{lemma}\label{Lemma3.3}
    For $2\leq p\leq6$ and any $\epsilon>0$, there exists a constant $C({\epsilon},p)>0$ such that
    \begin{align}\label{eq3.10}
    \Vert \mathcal{E}_{\Omega_1}f\Vert_{L^{p}(B_R)}\leq C({\epsilon},p)K^{\epsilon} \Big(\sum_{\lambda}\sum_{\tau_{\lambda}\subset \Omega_{1,\lambda}}\Vert \mathcal{E}_{\tau_{\lambda}}f\Vert^{2}_{L^{p}(\omega_{B_{R}})}\Big)^{1/2}.
    \end{align}
\end{lemma}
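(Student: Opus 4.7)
\textbf{Proof proposal for Lemma \ref{Lemma3.3}.} The plan is to reduce the decoupling over $\Omega_1$ to a one-dimensional (cylindrical) perturbed parabolic decoupling in the $\xi_1$-direction, since on each dyadic slab $\Omega_{1,\lambda}$ the profile $\xi_1\mapsto\xi_1^4$ has nonvanishing second derivative $\sim\lambda^2$, while the $\xi_2,\xi_3$ directions remain essentially flat at the scale $K^{-1/4}$ and act as parameters.

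First I would use the decomposition $\Omega_1=\bigcup_\lambda \Omega_{1,\lambda}$: Minkowski combined with Cauchy--Schwarz and the fact that the number of dyadic values of $\lambda\in[K^{-1/4},1/2]$ is $O(\log K)$ (absorbable into $K^\epsilon$) reduces matters to proving \eqref{eq3.10} for each $\Omega_{1,\lambda}$ individually. Next, on $\Omega_{1,\lambda}$ I would apply the change of variables $\xi_1=\lambda+\lambda^{-1}K^{-1/2}\eta_1$, so $\eta_1\in[0,M]$ with $M:=\lambda^2 K^{1/2}$ and each $\tau^j_\lambda$ becomes the unit interval $[j-1,j]$, while leaving $\xi_2,\xi_3$ unchanged. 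Setting $\tilde x_1:=\lambda^{-1}K^{-1/2}x_1+4\lambda^2K^{-1/2}x_4$ and $\tilde x_4:=K^{-1}x_4$ to absorb the linear term and normalize the quadratic coefficient, the phase becomes
\[
\tilde x_1\eta_1+\tilde x_4\,\psi(\eta_1)+x_2\xi_2+x_3\xi_3+x_4(\xi_2^4+\xi_3^4),
\]
with $\psi(\eta_1)=6\eta_1^2+4\lambda^{-2}K^{-1/2}\eta_1^3+\lambda^{-4}K^{-1}\eta_1^4$ satisfying $\psi''\sim 1$ and higher derivatives uniformly controlled on $[0,M]$, in exactly the non-degeneracy form of \eqref{positivedefinite}.

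Then I would rescale further by $\eta_1=M\zeta$ to move $\zeta\in[0,1]$ and apply the one-dimensional Bourgain--Demeter $\ell^2$ decoupling for the perturbed parabola (the $n=2$ case of Theorem \ref{BD}) in the $\zeta$-direction, treating $(\xi_2,\xi_3)$ as flat parameters by a slicing/Minkowski argument. The decoupling scale is $\delta=M^{-1}$, so the required ball radius in the normal direction to the parabola is $\delta^{-2}=M^2=\lambda^4 K$; in the original coordinates this amounts to partitioning the $x_4$-axis into intervals of length $K$, which fit inside $B_R$ since $K\ll R^\epsilon\ll R$. The associated loss is $M^\epsilon\leq K^\epsilon$ and the decoupling is valid precisely for $p\in[2,6]$. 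Undoing the change of variables, summing over the $K$-slabs covering $B_R$, and summing over the dyadic $\lambda$'s yields \eqref{eq3.10}.

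The main delicate point is the joint appearance of $x_4$ in both $\tilde x_4\,\psi(\eta_1)$ and $x_4(\xi_2^4+\xi_3^4)$: after the rescaling $\tilde x_4=K^{-1}x_4$ the latter carries a factor of $K$, which looks worrying. However, these terms depend only on $(\xi_2,\xi_3)$ and not on $\eta_1$, so they act as an $\eta_1$-independent modulation, which does not disturb the parabolic decoupling in $\eta_1$. Making this precise via the fiberwise slicing argument (or equivalently by noting that the Fourier support of each $\mathcal{E}_{\tau^j_\lambda}f$ still sits in the correct parabolic cap in $(\eta_1,\tilde x_4\psi(\eta_1))$-coordinates after the modulation) is where the careful bookkeeping lies; the restriction $p\leq 6$ is precisely the sharp range of the 1-D parabolic decoupling, which explains the exponent range in the lemma.
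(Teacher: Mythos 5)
Your overall strategy coincides with the paper's: both reduce the estimate over $\Omega_1$ to a two--dimensional curve decoupling for the projected profile $\xi_1\mapsto\xi_1^4$ by freezing the transverse variables $(x_2,x_3)$, and both exploit that on each dyadic slab the second derivative of $\xi_1^4$ is comparable to a nonzero constant. The paper packages the per-$\lambda$ rescaling into a single citation of Li--Zheng's one--dimensional finite-type decoupling lemma (their Lemma~3.4), applied after projecting the Fourier support via Guth's slicing lemma, whereas you first split dyadically, rescale each $\Omega_{1,\lambda}$ to the perturbed parabola, and apply the $n=2$ Bourgain--Demeter theorem; these are interchangeable in substance.

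There is, however, a genuine gap in the way you dispose of the $x_4(\xi_2^4+\xi_3^4)$ term. You assert that because this term is independent of $\eta_1$ it is ``an $\eta_1$-independent modulation, which does not disturb the parabolic decoupling.'' That reasoning is not valid on its own: the modulation depends on $x_4$, which is exactly the variable dual to $\psi(\eta_1)$, so a generic $(\xi_2,\xi_3)$-dependent phase of size $O(1)$ multiplied by $x_4\lesssim K$ would be a wildly oscillating factor that destroys the two--dimensional Fourier localization needed for the curve decoupling. The argument is saved only by the quantitative smallness $\xi_2,\xi_3\in[0,K^{-1/4}]\Rightarrow \xi_2^4+\xi_3^4\leq 2K^{-1}$: this makes the modulation a shift of at most $K^{-1}$ in the $\xi_4$-direction, which is exactly the thickness of the Fourier support once one has cut off by $\check\varphi_{K^{-1}}$ on a ball of radius $K$. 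This is precisely what the paper's appeal to the projection lemma (Lemma~4.1 of Guth's notes) formalizes: the projection of the thickened surface over $\Omega_1$ onto the $(\xi_1,\xi_4)$-plane lies in the $K^{-1}$-neighborhood of $\{(t,t^4)\}$ \emph{because} the transverse coordinates contribute at most $O(K^{-1})$ to $\xi_4$. Your writeup should state this smallness explicitly and run the slicing step through the Fourier-support argument (restrict to a ball of radius $K$, multiply by the $K^{-1}$-band-limited cutoff, slice in $(x_1,x_4)$, note the projected support is a $K^{-1}$-neighborhood of the degree-$4$ curve, then decouple), followed by Minkowski's inequality in $(x_2,x_3)$ to restore the $\ell^2$ structure. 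With that fix, the rest of your scaling bookkeeping (ball radius $\sim K$ in $x_4$, the $M^\epsilon\leq K^\epsilon$ loss, the $p\leq 6$ range) is correct.
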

\begin{proof}[Proof of Lemma\ref{Lemma3.3}]
    In order to prove the above lemma, we freeze two variables $x_2$ and $x_3$. Indeed, we consider a bump function $\varphi\in C^{\infty}_{c}(\mathbb{R}^4)$ with $supp(\varphi)\subset B(0,1)$ and $|\check{\varphi}(x)|\geq 1$ for all $x\in B(0,1)$. Define $F:=\check{\varphi}_{K^{-1}}\mathcal{E}_{\Omega_1}f$, where $\varphi_{K^{-1}}(y)=K^4\varphi(Ky)$ for $y\in \mathbb{R}^4$. We define $G:=F(\cdot,x_2,x_3,\cdot)$. From Lemma $4.1$ of \cite{Guthlec7} we get that $supp(\hat{G})$ is contained in the projection of $supp(\hat{F})$ on the plane $\xi_2=\xi_3=0$, i.e. in the $K^{-1}$ neighborhood of the curve $\Gamma_{\lambda}=\{(t,t^4):t\in [\lambda,2\lambda]\}$. Therefore, invoking Lemma $3.4$ of \cite{LiZheng} (see \cite{Yang} for the same lemma)  we get,
    \[\Vert F(\cdot,x_2,x_3,\cdot)\Vert_{L^{p}(\mathbb{R}^2)}\leq C({\epsilon},p)K^{\epsilon}\Big(\sum_{\tau}\Vert G_{\tau}(\cdot,x_2,x_3,\cdot)\Vert^2_{L^{p}(\mathbb{R}^2)}\Big)^{1/2},\]
    for $2\leq p\leq6$.
    Therefore, integrating both sides w.r.t $x_2,x_3$ variables we get 
    \[\Vert F\Vert_{L^{p}(\mathbb{R}^4)}\leq C({\epsilon},p)K^{\epsilon}\Big(\sum_{\tau}\Vert F_{\tau}\Vert^2_{L^{p}(\mathbb{R}^4)}\Big)^{1/2}.\]
    Thus, we have 
    \begin{eqnarray*}
        \Vert \mathcal{E}_{\Omega_{1,\lambda}}f\Vert_{L^{p}(B_K)}\lesssim \Vert F\Vert_{L^{p}(\mathbb{R}^4)}&\leq& C({\epsilon},p)K^{\epsilon}\Big(\sum_{\tau}\Vert F_{\tau}\Vert^2_{L^{p}(\mathbb{R}^4)}\Big)^{1/2}\\
        &\leq& C({\epsilon},p)K^{\epsilon}\Big(\sum_{\tau_{\lambda}\subset \Omega_{1,\lambda}}\Vert \mathcal{E}_{\tau_{\lambda}}f\Vert^2_{L^{p}(\omega_{B_{K}})}\Big)^{1/2}.
    \end{eqnarray*}
    Further, taking sum over $\lambda$ we get 
    \[\Vert \mathcal{E}_{\Omega_{1}}f\Vert_{L^{p}(B_K)}\leq C({\epsilon},p)K^{\epsilon}\Big(\sum_{\lambda}\sum_{\tau_{\lambda}\subset \Omega_{1,\lambda}}\Vert \mathcal{E}_{\tau_{\lambda}}f\Vert^2_{L^{p}(\omega_{B_{K}})}\Big)^{1/2}.\]
    Now, considering the sum over all balls  $B_K\subset B_{R}$ we get Lemma \ref{Lemma3.3}.
\end{proof}
Next, we need to estimate $\Vert \mathcal{E}_{\tau_{\lambda}}f\Vert_{L^{p}({B_{R}})}$ for the typical case of $\tau=[\lambda,\lambda+\lambda^{-1}K^{-1/2}]\times [0,K^{-1/4}]^2$. 
The estimate for other $\tau$ can be deduced by the same arguments. We apply the following change of variables
\begin{eqnarray}\label{change1}
    \xi_1=\lambda+\frac{\eta_1}{\lambda K^{1/2}},~ \xi_2=\frac{\eta_2}{K^{1/4}}, ~ \xi_3=\frac{\eta_3}{K^{1/4}}.
\end{eqnarray}

Therefore, we get \[|\mathcal{E}_{\tau}f(x)|=\Big|\int_{[0,1]^3}\tilde{f}(\vec{\eta})e(\tilde{x}_1\eta_1+\tilde{x}_2\eta_2+\tilde{x}_3\eta_3+\tilde{x}_4\psi_1(\vec{\eta}))d\vec{\eta}\Big|:=\Big|\tilde{\mathcal{E}}_{[0,1]^3}\tilde{f}(\tilde{x})\Big|,\]
where 
\begin{eqnarray*}
    &&\tilde{x}=(\lambda^{-1}K^{-1/2}x_1+4\lambda^2K^{-1/2}x_4, K^{-1/4}x_2,K^{-1/4}x_3, K^{-1}x_4),\\
    &&\tilde{f}(\vec{\eta})=\lambda^{-1}K^{-1}f(\lambda+\frac{\eta_1}{\lambda K^{1/2}},\frac{\eta_2}{K^{1/4}},\frac{\eta_3}{K^{1/4}}),\\
    &&\psi_{1}(\vec{\eta})=(6\eta^{2}_{1}+4\lambda^{-2}K^{-1/2}\eta^{3}_{1}+\lambda^{-4}K^{-1}\eta^{4}_{1})+\eta^{4}_{2}+\eta^{4}_{3}.
\end{eqnarray*}
Therefore, the extension operator $\tilde{\mathcal{E}}_{[0,1]^3}$ is associated with the 

hypersurface $\mathcal{S}_{1,4}:=\{(\xi_1,\xi_2,\xi_3,\phi_{1}(\xi)+\xi^{4}_2+\xi^{4}_3): (\xi_1,\xi_2,\xi_3)\in [0,1]^{3}\}$, with $\phi_1$ on  $ [0,1]$ satisfying the following properties 
\begin{eqnarray*}
   \phi_1''\sim 1,~ |\phi_1^{(3)}|\lesssim 1,~ |\phi_1^{(4)}|\lesssim 1~\text{and} ~|\phi_1^{(j)}|=0,~~~\text{for} ~~j\geq5.   
\end{eqnarray*}
 Using the decomposition \eqref{eq0} we define a new collection of sets 
\begin{align*}
    \mathcal{F}_4{(R,1,4)}:=\Big\{[a,a+R^{-1/2}]\times \theta_{k_2,\mu_2}\times \theta_{k_3,\mu_3}, [a,a+R^{-1/2}]\times \theta_{k_2,\mu_2}\times \theta_{0},\\
    [a,a+R^{-1/2}]\times \theta_{0}\times \theta_{k_2,\mu_2},[a,a+R^{-1/2}]\times \theta_{0}\times \theta_{0}\Big\},
\end{align*}
    where $a\in [0,1-R^{-1/2}]\cap R^{-1/2}\mathbb{Z}$,  $1\leq k_i\leq \frac{1}{4}\log_{2}R$ and $1\leq \mu_i\leq 2^{2(k_i-1)}$.
We claim the following lemma.
\begin{lemma}\label{Lemma3.5}
For $2\leq p\leq \frac{10}{3}$ and any $\epsilon>0$ , there exists a constant $C(\epsilon,p)$ s.t. 
    \begin{align}\label{lemma3.5}
     \Vert \tilde{\mathcal{E}}_{[0,1]^3}f\Vert_{L^{p}(B_{R})}\leq C({\epsilon},p)R^{\epsilon}\Big(\sum_{\vartheta \in\mathcal{F}_{4}(R,1,4)} \Vert \tilde{\mathcal{E}}_{\vartheta}f\Vert^{2}_{L^{p}(\omega_{B_{R}})}\Big)^{1/2}.
\end{align}
\end{lemma}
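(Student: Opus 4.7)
The plan is to mirror the proof strategy used for Theorem \ref{Maintheorem} itself -- a decomposition of $[0,1]^3$ at scale $K^{-1/4}$ combined with induction on scales -- adapted to the modified phase $\psi_1(\vec\eta) = \phi_1(\eta_1) + \eta_2^4 + \eta_3^4$. The simplification here is that $\phi_1'' \sim 1$ uniformly on $[0,1]$, so the $\eta_1$-direction is already non-degenerate and no dyadic subdivision is required for it; all curvature degeneracy lives in $(\eta_2, \eta_3)$. Fix $1 \ll K \ll R^\epsilon$ with $K^{-1/4}$ dyadic and partition $[0,1]^3 = \tilde{\Omega}_0 \cup \tilde{\Omega}_1 \cup \tilde{\Omega}_2 \cup \tilde{\Omega}_3$, where $\tilde{\Omega}_3 = [0,1] \times [K^{-1/4}, 1]^2$ is the fully non-degenerate region in $(\eta_2, \eta_3)$, $\tilde{\Omega}_1 = [0,1] \times [K^{-1/4},1] \times [0,K^{-1/4}]$ (and symmetrically $\tilde{\Omega}_2$) has exactly one small variable, and $\tilde{\Omega}_0 = [0,1] \times [0,K^{-1/4}]^2$ is the fully degenerate corner. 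Minkowski and Cauchy--Schwarz reduce matters to estimating each piece separately.

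For $\tilde{\Omega}_3$, I would follow the $\Omega_7$ argument of Section 2: subdivide dyadically $\tilde{\Omega}_3 = \bigcup_{\lambda_2, \lambda_3} [0,1] \times [\lambda_2, 2\lambda_2] \times [\lambda_3, 2\lambda_3]$ with $\lambda_j \in [K^{-1/4}, 1/2]$, partition each piece into rectangles of size $K^{-1/2} \times \lambda_2^{-1} K^{-1/2} \times \lambda_3^{-1} K^{-1/2}$, apply a rescaling analogous to (\ref{change7}) (now using $\eta_1 = a + K^{-1/2} \tilde\eta_1$ as well, since $\phi_1'' \sim 1$), and observe that the resulting phase satisfies the perturbed paraboloid conditions (\ref{positivedefinite}). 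Bourgain--Demeter's $\ell^2$-decoupling on $B_{R/K}$ then produces $(K^{1/2} R^{-1/2})$-cubes in the rescaled coordinates, which pull back to boxes of dimensions $R^{-1/2} \times \lambda_2^{-1} R^{-1/2} \times \lambda_3^{-1} R^{-1/2}$, exactly the boxes $[a, a + R^{-1/2}] \times \theta_{k_2, \mu_2} \times \theta_{k_3, \mu_3}$ appearing in $\mathcal{F}_4(R,1,4)$.

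For $\tilde{\Omega}_1$ (and by symmetry $\tilde{\Omega}_2$), I would adapt the freezing argument of Lemma \ref{Lemma3.3}: set $F = \check\varphi_{K^{-1}} \tilde{\mathcal{E}}_{\tilde{\Omega}_1} f$, freeze the small variable $x_3$, and use the Fourier-support projection lemma to recognize $G(\cdot, \cdot, x_3, \cdot)$ as having Fourier support near the $2$-surface $(\eta_1, \eta_2) \mapsto (\eta_1, \eta_2, \phi_1(\eta_1) + \eta_2^4)$ in $\mathbb{R}^3$. Then apply a Li--Zheng-style 3D decoupling in the frozen coordinates (extended to the mixed phase $\phi_1 + \eta_2^4$, whose Gaussian curvature $\phi_1''(\eta_1) \cdot 12 \eta_2^2$ is positive once $\eta_2 \geq K^{-1/4}$), integrate in $x_3$, and place the frozen variable in the $\theta_0$-slot of $\mathcal{F}_4(R,1,4)$. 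For $\tilde{\Omega}_0$, rescale $\eta_j = K^{-1/4} \tilde\eta_j$ for $j = 2, 3$ to renormalize to $[0,1]^3$ with a phase of the same form, apply the induction hypothesis at scale $R/K$, and transform back. Choosing $K = R^\delta$ with $\delta$ sufficiently small relative to $\epsilon$ closes the induction and absorbs the accumulated $K^\epsilon$ factors into $R^\epsilon$.

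The hard part is expected to be the case $\tilde{\Omega}_1$: unlike \cite{LiZheng}, where the frozen slice yields a pure $t \mapsto t^4$ curve, the frozen $2$-surface here carries the perturbed factor $\phi_1$ inherited from the previous rescaling in Section 3. One must therefore either generalize the Li--Zheng 3D decoupling to a mixed phase of the form $\phi_1(\eta_1) + \eta_2^4$ with $\phi_1''\sim 1$, or re-run the non-degenerate/one-small/both-small decomposition recursively inside the frozen slice. Tracking that the constants remain acceptable through this nested reduction -- so that the total loss still fits into $R^\epsilon$ after finitely many iterations -- is the technical core of the lemma.
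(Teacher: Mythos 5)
Your proposal follows the paper's broad strategy (partition $[0,1]^3$ at scale $K^{-1/4}$, treat the non-degenerate block via Bourgain--Demeter for perturbed paraboloids, and induct on scales for the degenerate pieces), but the paper uses a \emph{three}-piece decomposition $\mathcal{D}_1 = [0,1]\times[K^{-1/4},1]^2$, $\mathcal{D}_2 = [0,1]\times[0,K^{-1/4}]\times[0,1]$, $\mathcal{D}_3 = [0,1]\times[K^{-1/4},1]\times[0,K^{-1/4}]$, rather than your four-piece split; in particular it does not isolate the corner $[0,1]\times[0,K^{-1/4}]^2$ as a separate region, and handles both degenerate pieces $\mathcal{D}_2, \mathcal{D}_3$ by reducing to the lemma's own constant $\tilde D_p(R/K)$ rather than appealing to any new 3D curve-decoupling.

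There are two concrete gaps in your sketch. First, for $\tilde\Omega_0 = [0,1]\times[0,K^{-1/4}]^2$, rescaling only $\eta_2 = K^{-1/4}\tilde\eta_2$, $\eta_3 = K^{-1/4}\tilde\eta_3$ does \emph{not} give ``a phase of the same form'': the resulting phase is $\phi_1(\eta_1) + K^{-1}\tilde\eta_2^4 + K^{-1}\tilde\eta_3^4$, and the normalization $x_4 \mapsto K^{-1}x_4$ that restores the $\tilde\eta_j^4$ terms simultaneously turns $\phi_1$ into $K\phi_1$, whose second derivative is $\sim K$, violating the hypothesis $\phi_1''\sim 1$ on which $\tilde D_p$ is defined. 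You must first decouple the $\eta_1$-variable to length $K^{-1/2}$ (free, by freezing $x_2,x_3$ and applying 2D parabola decoupling, using $\phi_1''\sim 1$) and then translate and rescale $\eta_1 = a + K^{-1/2}\tilde\eta_1$ together with the other two variables; this is precisely what the paper's rectangles $\nu$ of $\xi_1$-length $K^{-1/2}$ encode. Second, for $\tilde\Omega_1 = [0,1]\times[K^{-1/4},1]\times[0,K^{-1/4}]$: you correctly flag the issue, but the obstruction is slightly different from what you describe. Even granted a Li--Zheng-type decoupling for $(\eta_1,\eta_2)\mapsto(\eta_1,\eta_2,\phi_1(\eta_1)+\eta_2^4)$ on the frozen slice, a single application at scale $K$ leaves $\xi_3$ at scale $K^{-1/4}$, not the required $R^{-1/4}$ of $\theta_0\in\mathcal{F}_4(R,1,4)$; and if you instead try to iterate by rescaling the pieces $[a,a+K^{-1/2}]\times[\lambda,\lambda+\lambda^{-1}K^{-1/2}]\times[0,K^{-1/4}]$ back to $[0,1]^3$, the $\eta_2$-phase becomes a perturbed parabola, so you leave the class of phases on which $\tilde D_p$ is defined and land in the class with \emph{two} non-degenerate directions and one quartic, i.e. the class governed by the paper's later Lemma \ref{Lemma05.2}. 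Your sketch would therefore need either to prove that separate lemma before Lemma \ref{Lemma3.5}, or to follow the paper's ordering of cases so that $\mathcal{D}_3$ is absorbed into the same $\tilde D_p(R/K)$ iteration.
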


Now assuming Lemma \eqref{Lemma3.5} for a moment we want to prove that for each $\tau_{\lambda}\subset \Omega_{1,\lambda}$, 
\begin{align}\label{hatch}
     \Vert {\mathcal{E}}_{\tau_{\lambda}}f\Vert_{L^{p}(B_{R})}\leq C(\epsilon,p)(\frac{R}{K})^{\epsilon}\Big(\sum_{\theta \subset \tau_{\lambda}} \Vert {\mathcal{E}}_{\theta}f\Vert^{2}_{L^{p}(\omega_{B_{R}})}\Big)^{1/2}.
\end{align}
In order to prove the above estimate we consider $\tau_{\lambda}=[\lambda,\lambda+\lambda^{-1}K^{-1/2}]\times [0,K^{-1/4}]^2$. Considering the change of variables \eqref{change1} we get
\begin{align*}
    \Vert \tilde{\mathcal{E}}_{\tau_{\lambda}}f\Vert_{L^{p}(B_{R})}=K^2\lambda \Vert\tilde{\mathcal{E}}_{[0,1]^3}\tilde{f}\Vert_{L^{p}(\mathcal{T}(B_R))},
\end{align*}
where $\tilde{f}(\vec{\eta})=\lambda^{-1}K^{-1}f(\lambda+\lambda^{-1}K^{-1/2}\eta_1,K^{-1/4}\eta_2,K^{-1/4}\eta_3)$ and $\mathcal{T}({x})=(\lambda^{-1}K^{-1/2}x_1+4\lambda^2K^{-1/2}x_4,K^{-1/4}x_2,K^{-1/4}x_3,K^{-1}x_4)$. Observe that the size of the ball $B_R$ under the translation and dilation map $\mathcal{T}$ is roughly $\lambda^{-1/2}K^{-1/2}R\times K^{-1/4}R\times K^{-1/4}R\times K^{-1}R$, which can be covered by $B_{\frac{R}{K}}$ balls. Now we can apply Lemma \ref{Lemma3.5} on each $B_{\frac{R}{K}}$ balls to estimate $\Vert \tilde{\mathcal{E}}_{[0,1]^3}\tilde{f}\Vert_{L^{p}(B_{\frac{R}{K}})}$, and  
in order to do that we need to check that the image of $\theta\subset \tau_{\lambda}$ is $\tilde{\theta}\in \mathcal{F}_{4}(R/K, 1,4)$ under the change of variables \eqref{change1}. The above things can be checked using  the similar arguments of case (a) and case (b) of \cite{LiZheng}. Therefore, we obtain 
\begin{align*}
\Vert \tilde{\mathcal{E}}_{[0,1]^3}\tilde{f}\Vert_{L^{p}(B_{\frac{R}{K}})}\leq C({\epsilon},p)(\frac{R}{K})^{\epsilon}\Big(\sum_{\tilde{\theta}\in \mathcal{F}_{4}(R/K,1,4)}\Vert \tilde{\mathcal{E}}_{\tilde{\theta}}\tilde{f}\Vert^{2}_{L^{p}(\omega_{B_{\frac{R}{K}}})}\Big)^{1/2}.
\end{align*}
Summing over all balls $B_{\frac{R}{K}}\subset \mathcal{T}(B_{R})$ and using Minkowski's integral inequality we get 

\begin{align*}
\Vert \tilde{\mathcal{E}}_{[0,1]^3}\tilde{f}\Vert_{L^{p}(\mathcal{T}(B_{R}))}\leq C({\epsilon},p)(\frac{R}{K})^{\epsilon}\Big(\sum_{\tilde{\theta}\in \mathcal{F}_{4}(R/K,1,4)}\Vert \tilde{\mathcal{E}}_{\tilde{\theta}}\tilde{f}\Vert^{2}_{L^{p}(\mathcal{T}(B_{R}))}\Big)^{1/2}.
\end{align*}
Taking the inverse change of variable we have 

\begin{eqnarray*}
\Vert {\mathcal{E}}_{\tau_{\lambda}}{f}\Vert_{L^{p}(B_{R})}\leq C({\epsilon},p)(\frac{R}{K})^{\epsilon}\Big(\sum_{\theta\subset\tau_{\lambda}}\Vert {\mathcal{E}}_{{\theta}}{f}\Vert^{2}_{L^{p}(\omega_{B_{R}})}\Big)^{1/2}.
\end{eqnarray*}

This proves \eqref{hatch}. Now plugging \eqref{hatch} in \eqref{eq3.10} we get 
\begin{eqnarray*}
    \Vert \mathcal{E}_{\Omega_1}f\Vert_{L^{p}(B_R)}&\leq &C({\epsilon},p)K^{\epsilon}(\frac{R}{K})^{\epsilon}\Big(\sum_{\lambda}\sum_{\tau_{\lambda}}\sum_{\theta\subset\tau_{\lambda}}\Vert \mathcal{E}_{\theta}f\Vert^{2}_{L^{p}(\omega_{B_{R}})}\Big)^{1/2}\\
&\leq&C({\epsilon},p)R^{\epsilon}\Big(\sum_{\theta\subset\Omega_1}\Vert \mathcal{E}_{\theta}f\Vert^{2}_{L^{p}(\omega_{B_{R}})}\Big)^{1/2},
\end{eqnarray*}
for $\theta\in \mathcal{F}_{4}(R,4)$.

~~~~~~~~~~~~~~~~~
Now it remains to prove Lemma \ref{Lemma3.5}. 
\begin{proof}[Proof of Lemma \ref{Lemma3.5}]
Denote the least constant $\tilde{D}_{p}(R)$ such that  \eqref{lemma3.5} holds. 
 We decompose the cube $[0,1]^3$ into three regions $[0,1]^3=\mathcal{D}_{1}\cup\mathcal{D}_{2}\cup\mathcal{D}_{3}$, where 
\begin{align*}
    &&\mathcal{D}_{1}=[0,1]\times [K^{-1/4},1]^2, ~ \mathcal{D}_{2}=[0,1]\times [0,K^{-1/4}]\times [0,1],\\
    &&\mathcal{D}_{3}=[0,1]\times[K^{-1/4},1]\times [0,K^{-1/4}].
\end{align*}
Therefore, \[\Vert \tilde{\mathcal{E}}_{[0,1]^3}{f}\Vert_p\leq \Vert \tilde{\mathcal{E}}_{\mathcal{D}_1}{f}\Vert_p+\Vert \tilde{\mathcal{E}}_{\mathcal{D}_2}{f}\Vert_p+\Vert \tilde{\mathcal{E}}_{\mathcal{D}_3}{f}\Vert_p.\]

\subsection{Estimation of $\Vert\tilde{\mathcal{E}}_{\mathcal{D}_1}f\Vert_{L^{p}(B_R)}$}\label{ED1}
Trivially for any $p\geq2$ we have 
\[\Vert\tilde{\mathcal{E}}_{\mathcal{D}_1}f\Vert_{L^{p}(B_K)}\lesssim K^{O(1)}\Big(\sum_{\nu\subset\mathcal{D}_1}\Vert \tilde{\mathcal{E}}_{\nu}f\Vert^2_{L^{p}(B_K)}\Big)^{1/2},\]
for all $\nu\in\mathcal{F}_{4}(K,1,4)$. Then summing over $B_K\subset B_R$ and using Minkowski's integral inequality we get 
\begin{align}\label{eq3.20}
    \Vert\tilde{\mathcal{E}}_{\mathcal{D}_1}f\Vert_{L^{p}(B_R)}\lesssim K^{O(1)}\Big(\sum_{\nu\subset\mathcal{D}_1}\Vert \tilde{\mathcal{E}}_{\nu}f\Vert^2_{L^{p}(B_R)}\Big)^{1/2},
\end{align}
for any $\nu\subset \mathcal{D}_1$ of size $K^{-1/2}\times \sigma^{-1}K^{-1/2}\times\lambda^{-1}K^{-1/2}$, for dyadic numbers $K^{-1/4}\leq \lambda,\sigma\leq 1/2$. We claim that  for $2\leq p\leq \frac{10}{3}$,
\begin{align}\label{eq3.21}
     \Vert\tilde{\mathcal{E}}_{\nu}f\Vert_{L^{p}(B_R)}\leq C({\epsilon},p)(\frac{R}{K})^{\epsilon}\Big(\sum_{\theta\subset\nu}\Vert \tilde{\mathcal{E}}_{\theta}f\Vert^2_{L^{p}(\omega_{B_R})}\Big)^{1/2},
\end{align}
where $\theta\in \mathcal{F}_{4}(R,1,4)$. Putting \eqref{eq3.21} in \eqref{eq3.20} we get 
\begin{align*}
     \Vert\tilde{\mathcal{E}}_{\mathcal{D}_1}f\Vert_{L^{p}(B_R)}\leq C({\epsilon},p)K^{O(1)}{R}^{\epsilon}\Big(\sum_{\theta\subset\mathcal{D}_1}\Vert \tilde{\mathcal{E}}_{\theta}f\Vert^2_{L^{p}(\omega_{B_R})}\Big)^{1/2}.
\end{align*}
Now, we aim to prove \eqref{eq3.21}. W.l.o.g we assume $\nu=[0,K^{-1/2}]\times[\sigma,\sigma+\sigma^{-1}K^{-1/2}]\times [\lambda,\lambda+\lambda^{-1}K^{-1/2}] $. Applying the change of variables $\xi_1=K^{-1/2}\eta_1, \xi_2=\sigma+\sigma^{-1}K^{-1/2}\eta_2,\xi_3=\lambda+\lambda^{-1}K^{-1/2}\eta_3$,  we get 
\begin{align*}
    \Vert \tilde{\mathcal{E}}_{\nu}f\Vert^{p}_{L^{p}(B_{R})}=\sigma\lambda K^{5/2}\Vert \mathcal{E}^{pertp}_{[0,1]^3}\tilde{f}\Vert^{p}_{L^{p}(\mathcal{T}_{1}(B_{R}))},
\end{align*}
where \[\tilde{f}(\vec{\eta})=K^{-3/2}\sigma^{-1}\lambda^{-1}f(K^{-1/2}\eta_1, \sigma+\sigma^{-1}K^{-1/2}\eta_2,\lambda+\lambda^{-1}K^{-1/2}\eta_3),\] and 
\[\mathcal{T}_{1}(x):=(K^{-1/2}x_1,K^{-1/2}\sigma^{-1}x_2+4\sigma^2K^{-1/2}x_4,K^{-1/2}\lambda^{-1}x_3+4\lambda^2K^{-1/2}x_4, K^{-1}x_4).\]
Therefore, the size of $\mathcal{T}_{1}(B_{R})$ is roughly $K^{-1/2}R\times \sigma^{-1}K^{-1/2}R\times \lambda^{-1}K^{-1/2}R\times K^{-1}R$, which can be covered by finitely overlapping balls $B_{\frac{R}{K}}$.
Now applying decoupling theory for perturbed paraboloid \cite{BD} in dimension $n=4$ w.r.t. each ball $B_{\frac{R}{K}}$ we get that for $2\leq p\leq \frac{10}{3}$, 
\begin{align*}
    \Vert \mathcal{E}^{pertp}_{[0,1]^3}\tilde{f}\Vert_{L^{p}(B_{\frac{R}{K}})}\leq C({\epsilon},p)(\frac{R}{K})^{\epsilon}\Big(\sum_{\tilde{\theta}:K^{1/2}R^{-1/2}-cube}\Vert \mathcal{E}^{pertp}_{\tilde{\theta}}\tilde{f}\Vert^{2}_{L^{p}(\omega_{B_{\frac{R}{K}}})}\Big)^{1/2}.
\end{align*}
Now summing over all the balls $B_{\frac{R}{K}}\subset \mathcal{T}_{1}(B_{R})$ we get 
\begin{align*}
    \Vert \mathcal{E}^{pertp}_{[0,1]^3}\tilde{f}\Vert_{L^{p}(\mathcal{T}_{1}(B_{R}))}\leq C({\epsilon},p)(\frac{R}{K})^{\epsilon}\Big(\sum_{\tilde{\theta}:K^{1/2}R^{-1/2}-cube}\Vert \mathcal{E}^{pertp}_{\tilde{\theta}}\tilde{f}\Vert^{2}_{L^{p}(\mathcal{T}_{1}(B_{R}))}\Big)^{1/2}.
\end{align*}
Now taking the inverse change of variables we deduce
\begin{align*}
    \Vert \tilde{\mathcal{E}}_{\nu}f\Vert_{L^{p}(B_{R})}\leq C({\epsilon},p)(\frac{R}{K})^{\epsilon}\Big(\sum_{\vartheta\subset\nu}\Vert \tilde{\mathcal{E}}_{{\vartheta}}\tilde{f}\Vert^{2}_{L^{p}(\omega_{B_{R}})}\Big)^{1/2},
\end{align*}
where $\vartheta\in \mathcal{F}_{4}(R,1,4)$.
This proves the claim of \eqref{eq3.21}.

\subsection{Estimate of $\Vert \tilde{\mathcal{E}}_{\mathcal{D}_{2}}f\Vert_{L^{p}(B_R)}$}\label{ED2}
Firstly, we claim that for $2\leq p\leq 6$ and any $\epsilon>0$ 
\begin{align*}
    \Vert \tilde{\mathcal{E}}_{\mathcal{D}_2}f\Vert_{L^{p}(B_R)}\leq C({\epsilon},p)K^{\epsilon}\Big(\sum_{\nu\subset\mathcal{D}_2} \Vert \tilde{\mathcal{E}}_{\nu}f\Vert_{L^{p}(\omega_{B_{R}})}\Big),
\end{align*}
where $\nu$ is a rectangle of size $K^{-1/2}\times K^{-1/4}\times K^{-1/2}$.
This can be proved using the similar proof of Lemma \ref{Lemma3.3}. Therefore, we focus on the estimate of $\Vert \tilde{\mathcal{E}}_{\nu}f\Vert_{L^{p}(B_{R})}$ and we  claim that 
\begin{eqnarray*}
    \Vert \tilde{\mathcal{E}}_{\nu}{f}\Vert_{L^{p}(B_R)}\leq \tilde{D}_{p}(\frac{R}{K})\Big(\sum_{\theta \in \mathcal{F}_{4}(R/K,1,4)} \Vert \tilde{\mathcal{E}}_{\theta}f\Vert^{2}_{L^{p}(\omega_{B_{R}})}\Big)^{1/2}, 
\end{eqnarray*}
where $\tilde{D}_{p}$ is the smallest constant s.t. inequality \eqref{lemma3.5}  holds. 
Indeed, 
w.o.l.g. we assume $\nu=[0,K^{-1/2}]\times [0,K^{-1/4}]\times [0,K^{-1/2}]$. Applying a suitable change of variables we get 
\begin{align*}
   \Vert \tilde{\mathcal{E}}_{\nu}f\Vert^{p}_{L^{p}(B_{R})}=  K^{9/4}\Vert \tilde{\mathcal{E}}_{[0,1]^3}\tilde{f}\Vert^{p}_{L^{p}(\mathcal{T}_{2}(B_{R}))},
\end{align*}
where $\tilde{f}(\vec{\eta})=K^{-5/4}f(K^{-1/2}\eta_1,K^{-1/4}\eta_2,K^{-1/2}\eta_3)$ and\\
$\mathcal{T}_{2}({x}):=(K^{-1/2}x_1,K^{-1/4}x_2,K^{-1/2}x_3,K^{-1}x_4)$. The size of $\mathcal{T}_{2}(B_{R})$ is roughly $K^{-1/2}R\times K^{-1/4}R\times K^{-1/2}R\times K^{-1}R$ and it could be covered by union of finitely overlapping balls of size $B_{\frac{R}{K}}$. By the definition of $\tilde{D}_{p}$ we get 
\begin{eqnarray*}\label{3star}
    \Vert \tilde{\mathcal{E}}_{[0,1]^3}\tilde{f}\Vert_{L^{p}(B_{\frac{R}{K}})}\leq \tilde{D}_{p}(\frac{R}{K})\Big(\sum_{\vartheta \in \mathcal{F}_{4}(R/K,1,4)} \Vert \tilde{\mathcal{E}}_{\vartheta}\tilde{f}\Vert^{2}_{L^{p}(\omega_{B_{R/K}})}\Big)^{1/2},~~\text{for}~~2\leq p\leq\frac{10}{3}.
\end{eqnarray*}
Now, first  we take the sum over all the balls $B_{\frac{R}{K}}$ contained in $\mathcal{T}_{2}(B_{R})$ and apply Minkowski's integral inequality and then apply the inverse change of variables to get 
\begin{eqnarray*}\label{4star}
    \Vert \tilde{\mathcal{E}}_{\nu}{f}\Vert_{L^{p}(B_R)}\leq \tilde{D}_{p}(\frac{R}{K})\Big(\sum_{\theta \in \mathcal{F}_{4}(R,1,4)} \Vert \tilde{\mathcal{E}}_{\theta}f\Vert^{2}_{L^{p}(\omega_{B_{R}})}\Big)^{1/2},
\end{eqnarray*}
for $2\leq p\leq\frac{10}{3}$ and $\theta\subset\nu$.
Similarly, repeating the arguments of subsection \ref{ED2} we get 
\begin{align*}
    \Vert \tilde{\mathcal{E}}_{\mathcal{D}_{3}}{f}\Vert_{L^{p}(B_R)}\leq C({\epsilon},p)K^{\epsilon} \tilde{D}_{p}(\frac{R}{K})\Big(\sum_{\theta \in \mathcal{F}_{4}(R,1,4)} \Vert \tilde{\mathcal{E}}_{\theta}f\Vert^{2}_{L^{p}(\omega_{B_{R}})}\Big)^{1/2},
\end{align*}
for $2\leq p\leq \frac{10}{3}$ and any $\epsilon>0$.
Hence, combining all the three estimates we get 
\begin{eqnarray*}\label{5star}
    \Vert \tilde{\mathcal{E}}_{[0,1]^3}{f}\Vert_{L^{p}(B_R)}\leq \Big(C({\epsilon},p)K^{O(1)}R^{\epsilon}+2C({\epsilon},p)K^{\epsilon}\tilde{D}_{p}(\frac{R}{K})\Big)\Big(\sum_{\theta \in \mathcal{F}_{4}(R,1,4)} \Vert \tilde{\mathcal{E}}_{\theta}f\Vert^{2}_{L^{p}(\omega_{B_{R}})}\Big)^{1/2}.
\end{eqnarray*}
Then the definition of $\tilde{D}_{p}$ implies 
\[\tilde{D}_{p}(R)\leq C({\epsilon},p)K^{O(1)}R^{\epsilon}+2C({\epsilon},p)K^{\epsilon}\tilde{D}_{p}(\frac{R}{K}).\]
Iterating the above for $[\log_{K}R]$ times we get $\tilde{D}_{p}(R)\leq C_{} R^{\epsilon}$.

This completes the proof of Lemma \ref{Lemma3.5}.
\end{proof}

\section{Estimates of $\mathcal{E}_{\Omega_4}f,\mathcal{E}_{\Omega_5}f$ and $\mathcal{E}_{\Omega_6}f$}
Due to symmetry, it is enough to prove estimates of $\mathcal{E}_{\Omega_6}f$. Note that the operator $\mathcal{E}_{\Omega_6}$ can be dealt with the similar approach as the estimates of $\mathcal{E}_{\Omega_1}f$. We give a sketch of the proof for the sake of completeness.
We decompose $\Omega_6$ as \[\Omega_6=\bigcup_{K^{-1/4}\leq \lambda,\sigma\leq \frac{1}{2}}[\lambda,2\lambda]\times [\sigma,2\sigma]\times [0,K^{-1/4}]:=\bigcup_{K^{-1/4}\leq \lambda,\sigma\leq\frac{1}{2}}\Omega_{\lambda,\sigma}.\]
  Further, we decompose 
  \[\Omega_{\lambda,\sigma}=\bigcup_{j,l}[\lambda+\frac{j-1}{\lambda K^{1/2}},\lambda+\frac{j}{\lambda K^{1/2}}]\times [\sigma+\frac{l-1}{\sigma K^{1/2}},\sigma+\frac{l}{\sigma K^{1/2}}]\times [0,K^{-1/4}]:=\bigcup_{j,l}\tau^{j,l}_{\lambda,\sigma} ,\]
for $1\leq j\leq \lambda^2K^{1/2}$ and $1\leq l\leq \sigma^2 K^{1/2}$.
Then using the same proof of Lemma \ref{Lemma3.3} we get that for $2\leq p\leq6$ and any $\epsilon>0$
\begin{align}\label{lemma05.1}
    \Vert\mathcal{E}_{\Omega_6}f\Vert_{L^{p}(B_R)}\leq C({\epsilon},p) K^{\epsilon} \Big(\sum_{\lambda,\sigma}\sum_{\tau_{\lambda,\sigma}}\Vert \mathcal{E}_{\tau_{\lambda,\sigma}}f\Vert^{2}_{L^{p}(\omega_{B_{R}})}\Big)^{1/2}.
\end{align}
Now we need to estimate $\Vert \mathcal{E}_{\tau_{\lambda,\sigma}}f\Vert^{2}_{L^{p}({B_{R}})}$ for each $\tau_{\lambda,\sigma}$. W.l.o.g. we assume $\tau_{\lambda,\sigma}=[\lambda,\lambda+\frac{1}{\lambda K^{1/2}}]\times [\sigma,\sigma+\frac{1}{\sigma K^{1/2}}]\times [0,K^{-1/4}] $.
Applying suitable change of variables we get 
\begin{align}\label{change2}
    |\mathcal{E}_{\tau_{\lambda,\sigma}}f(x)|=\lambda\sigma K^{9/4}|\mathcal{E'}_{[0,1]^3}\tilde{f}(\tilde{x})|,
\end{align}
where $\tilde{f}(\eta)=\lambda^{-1}\sigma^{-1}K^{-5/4}f(\lambda+\frac{1}{\lambda K^{1/2}}\eta_1,\sigma+\frac{1}{\sigma K^{1/2}}\eta_2,K^{-1/4}\eta_3)$ and $\mathcal{E'}_{[0,1]^3}$ denotes the Fourier extension operator associated with the phase function
\begin{eqnarray*}
    \psi_{6}(\vec{\eta})&=&(6\eta^2_1+4\lambda^{-2}K^{-1/2}\eta^3_1+\lambda^{-4}K^{-1}\eta^{4}_1)+(6\eta^2_2+4\sigma^{-2}K^{-1/2}\eta^3_2+\sigma^{-4}K^{-1}\eta^{4}_2)+\eta^{4}_{3}\\
    &:=&\varphi_1(\eta_1)+\varphi_{2}(\eta_2)+\eta^{4}_{3},
\end{eqnarray*}
with $\varphi_1,\varphi_2$ satisfying conditions \ref{positivedefinite}. 
Consider the collection of sets 
\begin{eqnarray*}
    \tilde{\mathcal{F}}_{4}(R,4):=\{[a,a+R^{-1/2}]\times [b,b+R^{-1/2}]\times [0,R^{-1/4}], [a,a+R^{-1/2}]\times [b,b+R^{-1/2}]\times \theta_{k,\mu} \},
\end{eqnarray*}
for $a,b\in [0,1-R^{-1/2}]\cap R^{-1/2}\mathbb{Z}, 1\leq k\leq \frac{1}{4}\log R$, and  $1\leq \mu\leq 2^{2(k-1)}$.
The following lemma holds.
\begin{lemma}\label{Lemma05.2}
For $2\leq p\leq \frac{10}{3}$ and any $\epsilon>0$, 
    \begin{align*}
        \Vert\mathcal{E'}_{[0,1]^3}f\Vert_{L^{p}(B_R)}\leq C({\epsilon},p)R^{\epsilon} \Big(\sum_{\vartheta\in\tilde{\mathcal{F}}_{4}(R,4)}\Vert \mathcal{E'}_{\vartheta}f\Vert^2_{L^{p}(\omega_{B_{R}})}\Big)^{1/2}.
    \end{align*}
\end{lemma}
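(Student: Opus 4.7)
\textbf{Proof plan for Lemma \ref{Lemma05.2}.} The plan is to reproduce the induction-on-scales template from Lemma \ref{Lemma3.5} tailored to the phase $\psi_6$, which has \emph{two} nondegenerate directions ($\varphi_1, \varphi_2$) and only \emph{one} degenerate direction ($\eta_3^4$). Let $\tilde{D}'_p(R)$ denote the smallest constant for which the claimed inequality holds. I split the unit cube as $[0,1]^3 = \mathcal{R}_1 \cup \mathcal{R}_2$, where $\mathcal{R}_1 = [0,1]^2 \times [K^{-1/4}, 1]$ and $\mathcal{R}_2 = [0,1]^2 \times [0, K^{-1/4}]$, and bound $\Vert \mathcal{E'}_{\mathcal{R}_1} f \Vert_{L^p(B_R)}$ and $\Vert \mathcal{E'}_{\mathcal{R}_2} f \Vert_{L^p(B_R)}$ separately.

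For the nondegenerate region $\mathcal{R}_1$, I mirror subsection \ref{ED1}: decompose dyadically as $\mathcal{R}_1 = \bigcup_{\lambda} [0,1]^2 \times [\lambda, 2\lambda]$ with $\lambda \in [K^{-1/4}, 1/2]$ dyadic, and further into boxes of size $K^{-1/2} \times K^{-1/2} \times \lambda^{-1} K^{-1/2}$. On each such box, the natural affine rescaling converts the phase into $\sum_{j=1}^3 \tilde{\phi}_j(\eta_j)$ with each $\tilde{\phi}_j$ satisfying \eqref{positivedefinite}. Bourgain--Demeter's decoupling for perturbed paraboloids from Section $7$ of \cite{BD} then applies at scale $R/K$ (the image of $B_R$ is covered by $B_{R/K}$-balls), and tracking cap sizes through the rescaling shows the decoupled caps lie in $\tilde{\mathcal{F}}_4(R,4)$. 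This yields
\[
\Vert \mathcal{E'}_{\mathcal{R}_1} f\Vert_{L^p(B_R)} \leq C(\epsilon, p) K^{O(1)} R^\epsilon \Big(\sum_{\vartheta \in \tilde{\mathcal{F}}_4(R,4),\ \vartheta \subset \mathcal{R}_1} \Vert \mathcal{E'}_\vartheta f\Vert^2_{L^p(\omega_{B_R})}\Big)^{1/2}.
\]

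For the degenerate slab $\mathcal{R}_2$, since only $\eta_3$ is degenerate I begin with a freezing-and-decoupling step in the style of Lemma \ref{Lemma3.3}: fix $x_3$ and observe that $F(x_1, x_2, \cdot, x_4)$ has Fourier support in a $K^{-1}$-neighborhood of the nondegenerate $2$-surface $(\eta_1, \eta_2) \mapsto (\eta_1, \eta_2, \varphi_1(\eta_1) + \varphi_2(\eta_2))$ in $\mathbb{R}^3$. Applying BD paraboloid decoupling in $\mathbb{R}^3$ (valid for $2 \leq p \leq 4$, which covers our range $p \leq \tfrac{10}{3}$) and integrating in $x_3$ reduces matters to rectangles $\nu \subset \mathcal{R}_2$ of size $K^{-1/2} \times K^{-1/2} \times K^{-1/4}$ at the cost of a $K^\epsilon$ factor. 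For each such $\nu$, I rescale with $\xi_1, \xi_2 \to K^{-1/2}$-affine and $\xi_3 = K^{-1/4}\eta_3$: the new phase is again $\tilde{\varphi}_1(\eta_1) + \tilde{\varphi}_2(\eta_2) + \eta_3^4$ with $\tilde{\varphi}_j$ satisfying \eqref{positivedefinite}, so the same class as $\psi_6$. The image of $B_R$ is covered by $B_{R/K}$-balls, whence the induction hypothesis $\tilde{D}'_p(R/K)$ applies on each; verifying that a $\theta \subset \nu$ with $\theta \in \tilde{\mathcal{F}}_4(R,4)$ maps to a cap in $\tilde{\mathcal{F}}_4(R/K, 4)$ for the rescaled phase gives
\[
\Vert \mathcal{E'}_{\mathcal{R}_2} f\Vert_{L^p(B_R)} \leq C(\epsilon, p) K^\epsilon \tilde{D}'_p(R/K) \Big(\sum_{\theta \in \tilde{\mathcal{F}}_4(R,4),\ \theta \subset \mathcal{R}_2} \Vert \mathcal{E'}_\theta f\Vert^2_{L^p(\omega_{B_R})}\Big)^{1/2}.
\]

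Combining the two estimates yields the recursion $\tilde{D}'_p(R) \leq C(\epsilon, p) K^{O(1)} R^\epsilon + C(\epsilon, p) K^\epsilon \tilde{D}'_p(R/K)$, which iterates $\lfloor \log_K R \rfloor$ times (exactly as at the end of the proof of Lemma \ref{Lemma3.5}) to produce $\tilde{D}'_p(R) \leq C R^\epsilon$. The main obstacle I expect is the cap-tracking in the degenerate direction: verifying that a dyadic $\theta_{k,\mu}$ cap at scale $R$ rescales, under $\xi_3 = K^{-1/4}\eta_3$, to a dyadic $\theta_{k', \mu'}$ cap at scale $R/K$, consistently with the definition \eqref{eq0}; this is a bookkeeping computation but is the only step whose correctness is not immediate. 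A secondary check is the uniform nondegeneracy of the $2$-surface $(\eta_1, \eta_2, \varphi_1 + \varphi_2)$ used in the freezing step, which follows immediately from $\varphi_j'' \sim 1$ but should be recorded to justify the use of BD in $\mathbb{R}^3$.
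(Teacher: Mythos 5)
Your proposal is correct and follows essentially the same route as the paper: the paper splits $[0,1]^3 = \mathcal{D}^6_1 \cup \mathcal{D}^6_2$ with $\mathcal{D}^6_1 = [0,1]^2\times[K^{-1/4},1]$ and $\mathcal{D}^6_2 = [0,1]^2\times[0,K^{-1/4}]$ (identical to your $\mathcal{R}_1, \mathcal{R}_2$), treats $\mathcal{D}^6_1$ by the dyadic rescaling plus Bourgain--Demeter argument of subsection~\ref{ED1}, and treats $\mathcal{D}^6_2$ by the freezing-plus-parabolic-rescaling-plus-induction argument of subsection~\ref{ED2}. The one place where you fill in a genuine detail the paper elides: for the freezing step on $\mathcal{R}_2$ you decouple the frozen two-dimensional projection $\{(\eta_1,\eta_2,\varphi_1(\eta_1)+\varphi_2(\eta_2))\}$ via Bourgain--Demeter in $\mathbb{R}^3$ (range $p\leq4\supset[2,\tfrac{10}{3}]$), which is the correct adaptation of Lemma~\ref{Lemma3.3} to the present phase $\psi_6$ having \emph{two} nondegenerate directions rather than one; the paper merely writes ``similar arguments of subsection~\ref{ED2} with appropriate change of variables'' without spelling this out. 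Your cap-tracking observation (a $\theta_{k,\mu}$ at scale $R$ restricted to $[0,K^{-1/4}]$ maps under $\xi_3 = K^{-1/4}\eta_3$ to $\theta_{k,\mu}$ at scale $R/K$) is exactly the bookkeeping needed and is correct.
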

The Lemma \ref{Lemma05.2} can be proved in a similar manner as Lemma \ref{Lemma3.5}. However, we shall give a sketch of the proof later. Therefore, assuming the above lemma we aim to prove that for $2\leq p\leq \frac{10}{3}$ and any $\epsilon>0$,
\begin{align}\label{lastclaim}
    \Vert \mathcal{E}_{\tau_{\lambda,\sigma}}f\Vert_{L^{p}(B_{R})}\lesssim (R/K)^{\epsilon}\Big(\sum_{\theta\subset\tau_{\lambda,\sigma}}\Vert\mathcal{E}_{\theta}f\Vert^{2}_{L^{p}(\omega_{B_{R}})}\Big)^{1/2}.
\end{align}
 Observe that using \eqref{change2} we get 
 \begin{align*}
    \Vert\mathcal{E}_{\tau_{\lambda,\sigma}}f\Vert_{L^{p}(B_{R})}=\lambda\sigma K^{9/4}\Vert\mathcal{E'}_{[0,1]^3}\tilde{f}\Vert_{L^{p}(\mathcal{T}_{6}(B_{R}))},
\end{align*}
where $\mathcal{T}_{6}(x)=\Big(\lambda^{-1}K^{-1/2}x_1+4\lambda^2K^{-1/2}x_4, \sigma^{-1}K^{-1/2}x_2+4\sigma^2K^{-1/2}x_4, K^{-1/4}x_3, K^{-1}x_{4}\Big)$.
Note that $\mathcal{T}_{6}(B_{R})$ can be covered by $B_{\frac{R}{K}}$ balls. Therefore, applying Lemma \ref{Lemma05.2} we get 
\begin{align*}
        \Vert\mathcal{E'}_{[0,1]^3}\tilde{f}\Vert_{L^{p}(B_{\frac{R}{K}})}\leq C({\epsilon},p)(\frac{R}{K})^{\epsilon} \Big(\sum_{\vartheta\in\tilde{\mathcal{F}}_{4}(R,4)}\Vert \mathcal{E'}_{\vartheta}\tilde{f}\Vert^2_{L^{p}(\omega_{B_{\frac{R}{K}}})}\Big)^{1/2}.
    \end{align*}
    It is to be noted that before applying Lemma \ref{Lemma05.2} we need to check that for any $\theta\subset\tau_{\lambda,\sigma}$, the image $\mathcal{T}_{6}(\theta)={\vartheta}\in \tilde{\mathcal{F}}_{4}(R,4)$. This could be verified using similar arguments as case $(a)$ and case $(b)$ of \cite{LiZheng}. Now summing over all $B_{\frac{R}{K}}\subset \mathcal{T}_{6}(B_{R})$  and using Minkowski's integral inequality and finally applying inverse change of variables we get the claim \eqref{lastclaim}. Now putting \eqref{lastclaim} in \eqref{lemma05.1} we get 
    \begin{align*}
        \Vert\mathcal{E}_{\Omega_6}f\Vert_{L^{p}(B_R)}\leq C_{\epsilon}K^{\epsilon}(\frac{R}{K})^{\epsilon} \Big(\sum_{\lambda,\sigma}\sum_{\tau_{\lambda,\sigma}}\sum_{\theta\subset \tau_{\lambda,\sigma}}\Vert \mathcal{E}_{\theta}f\Vert^{2}_{L^{p}(\omega_{B_{R}})}\Big)^{1/2}.
    \end{align*}
    \begin{proof}[Proof of Lemma \ref{Lemma05.2}]
        Note that $\Omega_6=[K^{-1/4},1]^2\times [0,K^{-1/4}]$. Therefore, the difficulty in estimating $\mathcal{E}_{\Omega_6}f$ arises when $\xi_3=0$. In order to overcome this difficulty we decompose the unit cube $[0,1]^3$ into two rectangles 
        \[\mathcal{D}^6_1:=[0,1]^2\times [K^{-1/4},1], \mathcal{D}^6_{2}=[0,1]^2\times [0,K^{-1/4}].\]
   
    Therefore, $\Vert\mathcal{E'}_{[0,1]^3}f\Vert_{L^{p}(B_{R})}\leq \Vert\mathcal{E'}_{\mathcal{D}^{6}_{1}}f\Vert_{L^{p}(B_{R})}+\Vert\mathcal{E'}_{\mathcal{D}^{6}_{2}}f\Vert_{L^{p}(B_{R})}$.
    It is not hard to see that the estimate of $\Vert\mathcal{E'}_{\mathcal{D}^{6}_{1}}f\Vert_{L^{p}(B_{R})}$ can be deduced using the similar arguments of subsection \ref{ED1} with appropriate change of variables. On the other hand,  the estimate of $\Vert\mathcal{E'}_{\mathcal{D}^{6}_{2}}f\Vert_{L^{p}(B_{R})}$ can be proved using similar arguments of subsection \ref{ED2} with appropriate change of variables. This completes the proof of Theorem \ref{Maintheorem}.
     \end{proof}
     
     \section*{Acknowledgements}
     The author is profoundly thankful to Prof. Saurabh Shrivastava for encouraging him to study decoupling theory. The author is grateful to Prof. Sanghyuk Lee for providing an opportunity to give a talk on decoupling in his group seminar. The author is also thankful  to Dr. Abhishek Ghosh for some discussions on decoupling theory,  and 
      Zhuoran Li for bringing into the author's knowledge that the main result of this article is closely related to a more general result of \cite{Zhuoran} after the submission of this article in arXiv.
        Some part of the work was done when the author was a  Post Doctoral fellow at the Seoul National University and this work has been partially supported by NRF grant no. 2022R1A4A1018904 funded by the Korea government(MSIT). This work is also supported by DST inspire faculty award\\
        (ref. no. DST/INSPIRE/04/2023/002187) and Indian Institute of Science, Bengaluru. 

\bibliographystyle{plain}

\end{document}